\newtheorem{theorem}{Theorem}[section]
\newtheorem{lemma}[theorem]{Lemma}
\newtheorem{corollary}[theorem]{Corollary}
\newtheorem{proposition}[theorem]{Proposition}
\newtheorem*{theoremIntro}{Main Theorem}
\newtheorem*{corollaryIntro}{Corollary}
\theoremstyle{definition}
\newtheorem{definition}[theorem]{Definition}
\newtheorem{remark}[theorem]{Remark}
\newcommand{\co}{\colon\thinspace} 
\newcommand{\RR}{\mathbb{R}} 
\newcommand{\CC}{\mathbb{C}} 
\newcommand{\ZZ}{\mathbb{Z}} 
\newcommand{\TT}{\mathbb{T}} 
\renewcommand{\SS}{\mathbb{S}} 
\DeclareMathOperator{\Id}{Id} 
\let\P\relax
\DeclareMathOperator{\P}{\mathcal{P}} 
\DeclareMathOperator{\Po}{\mathcal{P}_{\!\!o}} 
\newcommand{\D}{\mathcal{D}} 
\newcommand{\Do}{{\mathcal{D}_{\!o}}} 
\DeclareMathOperator{\MCG}{\pi_0\D} 
\newcommand{\F}{\mathscr{F}} 
\newcommand{\bdot}{\mathbin{\vcenter{\hbox{\scalebox{0.4}{$\bullet$}}}}}
\newcommand{\inclusion}{\hookrightarrow}
\def\CS{{\mathcal{CS}}}
\def\from{\co}
\def\R{{\RR}}
\def\Z{{\mathbb{Z}}}
\def\cU{{\mathcal{U}}}
\def\cV{{\mathcal{V}}}
\def\rst#1{|_#1}
\def\eps{\varepsilon}
\def\del{\partial}
\DeclareMathOperator{\SL}{SL} 
\title{On the contact mapping class group\\ of Legendrian circle bundles}
\author{Emmanuel Giroux \and Patrick Massot}
\begin{document}

\maketitle

\begin{abstract}
In this paper, we determine the group of contact transformations modulo contact
isotopies for Legendrian circle bundles over closed surfaces of nonpositive
Euler characteristic. These results extend and correct those presented by the
first author in \cite{Giroux_transfo}. The main ingredient we use is 
connectedness of certain spaces of embeddings of surfaces into contact
3-manifolds. In the third section, this connectedness question is studied in
more details with a number of (hopefully instructive) examples.
\end{abstract}

In this paper, we study contact transformations of $3$-manifolds which are
circle bundles equipped with contact structures tangent to the fibers. The main
example of such a manifold is the unit cotangent bundle $V = T_1^*S$ of a
surface $S$, endowed with its canonical contact structure $\xi$: this contact
manifold is also called the manifold of cooriented contact elements over $S$.
Other examples are obtained as follows: for any positive integer $d$ dividing
$|2g-2|$ where $g$ is the genus of $S$, the manifold $V$ admits a $d$-fold
fibered cyclic cover $V_d$ and the pullback $\xi_d$ of $\xi$ on $V_d$ is a
contact structure tangent to the fibers of $V_d$ over $S$. It is  a nice and
easy observation that all Legendrian circle bundles are of this form (see
\cite[p.\ 179]{Lutz_pivot}).

Our goal here is to determine the contact mapping class group of $(V_d, \xi_d)$,
namely the group $\pi_0 \D (V_d ; \xi_d)$ where $\D (V_d ; \xi_d)$ denotes the 
group of contact transformations of $(V_d, \xi_d)$. This group has an
obvious homomorphism to the usual (smooth) mapping class group 
$\pi_0 \D (V_d)$ (where $\D (V_d)$ consists of all diffeomorphisms of
$V_d$) which has been computed in \cite{Waldhausen_klasse}. By standard
fibration results (see Section 1), the kernel of this homomorphism is
tightly related to the fundamental group of the isotopy class of
$\xi_d$, \emph{i.e.} the connected component of $\xi_d$ in the space
$\CS(V_d)$ of all contact structures on $V_d$. Note here that a contact
transformation (or contactomorphism) is a diffeomorphism preserving the
contact structure and its coorientation. 

Our main result is the following theorem, in which $V_d$ is endowed with any
principal circle bundle structure inherited from one on $V = T_1^*S$---the case
$g=1=d$ was previously treated by H.~Geiges and M.~Klukas in \cite{GeigesT3}:
 
\begin{theoremIntro}[Theorem \ref{thm:diffeos_tangents} and Corollary
\ref{cor:pi1}]
Let $S$ be a closed, connected, orientable surface of genus $g \ge 1$ and $d$ a
positive integer dividing $|2g-2|$. Denote by $R_t \from V_d \to V_d$ the action
of $2\pi t \in \R/2\pi\Z$ by rotation along the fibers. Then:
\begin{itemize}
\item
The fundamental group $\pi_1 \bigl(\CS(V_d), \xi_d\bigr)$ is infinite cyclic and
generated by the loop $(R_t)_*\xi_d$, $t \in [0,1/d]$.
\item
The kernel of the natural homomorphism
$$ \pi_0 \D (V_d; \xi_d) \to \pi_0 \D (V_d) $$
is the cyclic group $\Z/d\Z$ spanned by the contact mapping classes of the deck
transformations of $V_d$ over $T_1^*S$. 
\end{itemize}
\end{theoremIntro}

As a direct consequence, we obtain:

\begin{corollaryIntro}[Corollary~\ref{cor:elements_contact}]
Let $S$ be a closed orientable surface of genus $g \ge 2$. Then the natural
homomorphism
$$ \pi_0 \D (S) \to \pi_0 \D (T_1^*S ; \xi) $$
induced by the differential is an isomorphism.
\end{corollaryIntro}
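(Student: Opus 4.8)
The plan is to deduce the Corollary from the Main Theorem (specifically, its second bullet point) by analyzing the natural homomorphism in question. I need to prove that the differential map $\pi_0\D(S) \to \pi_0\D(T_1^*S;\xi)$ is an isomorphism when $g \geq 2$, which amounts to establishing both injectivity and surjectivity.

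First I would set up the comparison with the smooth mapping class group. Any diffeomorphism $\phi$ of $S$ induces a diffeomorphism $d\phi$ of $T_1^*S$ that preserves the canonical contact structure $\xi$ and its coorientation, giving the map $\pi_0\D(S) \to \pi_0\D(T_1^*S;\xi)$. Composing with the forgetful homomorphism $\pi_0\D(T_1^*S;\xi) \to \pi_0\D(T_1^*S)$ lands in the smooth mapping class group of $T_1^*S$, which was computed by Waldhausen. The key classical fact I would invoke is that for a closed orientable surface of genus $g\geq 2$, the projection $T_1^*S \to S$ induces an isomorphism between $\pi_0\D(S)$ and the relevant part of $\pi_0\D(T_1^*S)$, because the unit cotangent bundle is a Seifert-fibered space whose base and fiber structure is rigid; in particular the Euler number $2-2g$ is nonzero so the fibration is essentially unique up to isotopy, and every diffeomorphism of $T_1^*S$ is isotopic to one preserving the fibration and hence covering a diffeomorphism of $S$.

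The core of the argument is to apply the Main Theorem with $d=1$. In that case $V_1 = T_1^*S = V$ and $\xi_1 = \xi$, and the second bullet states that the kernel of $\pi_0\D(V;\xi) \to \pi_0\D(V)$ is $\Z/d\Z = \Z/1\Z$, i.e.\ trivial. Thus the forgetful homomorphism $\pi_0\D(T_1^*S;\xi) \to \pi_0\D(T_1^*S)$ is \emph{injective}. Combining this with the diagram
\begin{equation*}
\pi_0\D(S) \longrightarrow \pi_0\D(T_1^*S;\xi) \longrightarrow \pi_0\D(T_1^*S),
\end{equation*}
injectivity of the composite (from the smooth Seifert rigidity when $g\geq 2$) together with injectivity of the second arrow already shows the first arrow is injective. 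For surjectivity of the differential map, I would argue that every contactomorphism, after passing to its smooth isotopy class, is isotopic to one covering a surface diffeomorphism (again by Waldhausen together with the nonvanishing Euler number), and then that this covering diffeomorphism's canonical lift $d\phi$ is contact-isotopic to the original contactomorphism—this last step uses precisely the triviality of the kernel from the Main Theorem, so that two contactomorphisms with the same underlying smooth isotopy class are already contact isotopic.

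The main obstacle I anticipate is the surjectivity step, and specifically verifying that the smooth mapping class group of $T_1^*S$ contributes nothing beyond the image of $\pi_0\D(S)$ when $g\geq 2$. This requires knowing that $\pi_0\D(S) \to \pi_0\D(T_1^*S)$ is surjective, which is a statement purely about Waldhausen's computation of mapping class groups of Seifert-fibered spaces; the genus hypothesis $g\geq 2$ ensures the base orbifold is hyperbolic and the fibration rigid, ruling out exotic diffeomorphisms that permute or reverse fibers in unexpected ways. Once this smooth input is in place, the contact refinement is essentially formal: the triviality of the kernel (the $d=1$ case of the Main Theorem) upgrades the smooth statement to the contact statement, turning the bijection on smooth mapping class groups into a bijection on contact mapping classes. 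I would therefore present the smooth surjectivity as a cited consequence of \cite{Waldhausen_klasse} and devote the proof to threading it through the injective forgetful map supplied by the Main Theorem.
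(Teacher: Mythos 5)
Your injectivity argument and your final ``trivial kernel'' step (Theorem~\ref{thm:diffeos_tangents} with $d=1$) match the paper, but your surjectivity step rests on a smooth input that is false: for $g \ge 2$ the map $\pi_0\D(S) \to \pi_0\D(T_1^*S)$ is \emph{not} surjective, and no rigidity of the Seifert fibration makes it so. Waldhausen's Satz 10.1 only provides, for each diffeomorphism of $V = T_1^*S$, an isotopy to \emph{some} fibered diffeomorphism $f$ covering a diffeomorphism $\bar f$ of $S$; it does not say $f$ is isotopic to the canonical lift $D\bar f$. The discrepancy $f \circ D(\bar f)^{-1}$ is fibered over the identity, i.e.\ essentially a gauge transformation; such a map is classified up to vertical homotopy by a class $c \in H^1(S;\Z)$ and acts on $\pi_1(V)$ by $a \mapsto a\,z^{c(a)}$, where $z$ is the central, infinite-order fiber class. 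Since $\pi_1(S)$ is centerless for $g\ge 2$, this automorphism is nontrivial in $\mathrm{Out}(\pi_1 V)$ whenever $c \ne 0$ (if it were conjugation by $h$, then $[h]$ would be central in $\pi_1(S)$, forcing $h = z^k$ and $c=0$), and it cannot agree in $\mathrm{Out}$ with any $D\bar f$ either: that would force $\bar f_*$ to be inner, hence $\bar f$ isotopic to the identity, hence again $c = 0$. As $\pi_0\D(V) \cong \mathrm{Out}(\pi_1 V)$ for this Haken manifold, the vertical twists contribute mapping classes of $V$ (a $\Z^{2g}$-worth of them) outside the image of $\pi_0\D(S)$, so the surjectivity you propose to cite from \cite{Waldhausen_klasse} fails, and with it your route to surjectivity of the differential map.

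This is precisely the point where the paper must use contact geometry rather than purely smooth topology. Given a contactomorphism $\varphi$, it writes $f = \psi_1 \circ \varphi$ fibered over $\bar f$, just as you do, but then observes that $f \circ D(\bar f)^{-1}$ is fibered over the identity \emph{and smoothly isotopic to a contactomorphism} (through $t \mapsto \psi_{1-t} \circ \varphi \circ D(\bar f)^{-1}$), and invokes Lemma~\ref{lemma:over_identity}: a diffeomorphism fibered over the identity which is isotopic to a contactomorphism is isotopic to the identity. That lemma is proved via Proposition~\ref{prop:decoupage} --- the isotopy class of the dividing set of a fibered torus over an essential circle is an invariant --- which is what rules out the vertical twists: a nontrivial twist would move those dividing curves, whose classes are essential and distinct from the fiber class. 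Only after this step does one know $\varphi$ is smoothly isotopic to $D\bar f$, at which point the trivial kernel from Theorem~\ref{thm:diffeos_tangents} upgrades smooth isotopy to contact isotopy exactly as in your outline. Note that if your claimed smooth surjectivity held, Lemma~\ref{lemma:over_identity} would be superfluous and every vertical twist would be smoothly isotopic to a contactomorphism, which that very lemma (combined with their nontriviality in $\pi_0\D(V)$) forbids; the contact refinement here is genuinely not formal.
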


This corollary is stated as Theorem 1 in \cite{Giroux_transfo} but the ``proof''
given there contains a mistake. See Section 3.1 for a detailed erratum and
Section 3.2 for several related examples.

In the case $g=1$, each manifold $V_d$ is diffeomorphic to $\TT^3 = (\RR/\ZZ)^3$
fibering over $S = \TT^2$ by the projection $(x,y,z) \mapsto (x,y)$, and its
contact structure $\xi_d$ can be defined by
\[ \cos(2d\pi z)\, dx - \sin(2d\pi z)\, dy = 0, \quad
   x, y, z \in  \RR/\ZZ. \]
Then:

\begin{theoremIntro}[Theorem~\ref{thm:t3}]
The image of the obvious homomorphism
$$ \pi_0 \D (\TT^3, \xi_d) \to \pi_0 \D (\TT^3) = \SL_3(\ZZ) $$
is the subgroup of transformations preserving $\ZZ^2 \times \{0\} \subset
\ZZ^3$. In particular, this subgroup is isomorphic to $\pi_0 \D(\TT^3,\xi_1)$.
\end{theoremIntro}

Finally, for $g=0$, an unpublished result of M.~Fraser \cite{Fraser_P2} 
shows that the contact transformation group of the standard projective
$3$-space (namely, the unit cotangent bundle of the $2$-sphere) is connected.
This completes the list of contact mapping class groups for unit cotangent
bundles of closed orientable surfaces.

\section{Natural fibrations in contact topology} \label{sec:fibrations}

For any compact manifold $V$ with (possibly empty) boundary, we denote
by $\D (V, \partial V)$ the group of diffeomorphisms of $V$ relative to
a neighborhood of the boundary. When the boundary of $V$ is empty, we
sometimes drop $\partial V$ from our notations.

\begin{lemma} \label{lemma:Gray}
Let $(V, \xi)$ be a compact contact manifold. The natural map
$$ \D (V, \partial V) \to \D (V, \partial V) \bdot \xi, \quad
   \phi \mapsto \phi_*\xi, $$
is a locally trivial fibration whose fiber is the contact transformation group
$\D (V, \partial V; \xi) \subset \D (V, \partial V)$. 
\end{lemma}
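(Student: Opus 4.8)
The plan is to recognize this as a statement that the orbit map of a group action is a fibration, where the group is $\D(V,\partial V)$ acting on the space of contact structures by pushforward. The standard tool for promoting a smooth orbit map to a locally trivial fibration is to produce local sections of the orbit map near each point of the base; equivalently, to produce continuous local "lifts" assigning to each contact structure near $\xi' = \phi_*\xi$ a diffeomorphism carrying a fixed reference structure to it, depending continuously on the target. Once such local sections exist, local triviality over a neighborhood $\mathcal{U}$ of $\xi'$ follows by the usual trick: the preimage of $\mathcal{U}$ is homeomorphic to $\mathcal{U} \times \D(V,\partial V;\xi)$ via $(\psi, h) \mapsto \sigma(\psi)\circ h$, where $\sigma$ is the local section, and the inverse is $\phi \mapsto (\phi_*\xi,\, \sigma(\phi_*\xi)^{-1}\circ\phi)$.

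First I would reduce to constructing a local section near the base point $\xi$ itself, since the action of $\D(V,\partial V)$ on itself by left translation intertwines the orbit maps based at $\xi$ and at any $\phi_*\xi$, so a section near one basepoint transports to a section near any other. Then the genuine content is a parametrized, relative version of Gray's stability theorem: given a smooth family of contact structures $\xi_\psi$ parametrized by $\psi$ in a neighborhood of $\xi$ in $\D(V,\partial V)\bdot\xi$, with $\xi_\xi = \xi$, I would produce a family of diffeomorphisms $\sigma(\xi_\psi)$, equal to the identity near $\partial V$ and depending continuously (indeed smoothly) on the parameter, with $\sigma(\xi_\psi)_*\xi = \xi_\psi$ and $\sigma(\xi)=\Id$. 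The construction is the classical Moser/Gray argument: writing the path of contact structures from $\xi$ to $\xi_\psi$, differentiate the defining condition, solve the resulting linear algebraic equation for a time-dependent contact vector field $X_t$ (this is where the contact condition makes the relevant bundle map invertible, so $X_t$ is uniquely and smoothly determined), and integrate the flow. Because $\xi_\psi$ agrees with $\xi$ near $\partial V$, the vector field vanishes there and the flow stays in $\D(V,\partial V)$; and since $V$ is compact, the flow is defined for all time up to $1$.

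The main obstacle I expect is the \emph{continuity in the parameter} together with the relative boundary condition, rather than Gray's theorem itself. One must check that the dependence of $X_t$ on $\psi$ is continuous (in the appropriate $C^\infty$ topologies on vector fields and on the space of contact structures), so that the time-$1$ flow $\sigma(\xi_\psi)$ varies continuously and the claimed homeomorphism in the local trivialization is genuinely continuous with continuous inverse. A clean way to handle this uniformly is to work with the Gray stability argument applied to a \emph{tautological} family: restrict the orbit map to a contractible chart $\mathcal{U}$ of contact structures, choose a smooth contraction of $\mathcal{U}$ onto $\xi$ (or a smooth family of paths $s\mapsto \eta_s$ from $\xi$ to each $\eta\in\mathcal{U}$), and run the normalized Gray flow along these paths with parameters, extracting the section from the endpoint. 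The boundary-relative condition is preserved throughout because the whole construction is natural and local sections can be taken supported away from $\partial V$.

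I would conclude by verifying the two defining properties of a locally trivial fibration: that the candidate trivialization $\Phi\from \mathcal{U}\times\D(V,\partial V;\xi)\to (\bdot)^{-1}(\mathcal{U})$, $(\eta,h)\mapsto \sigma(\eta)\circ h$, is a homeomorphism commuting with projection to $\mathcal{U}$, and that the fiber is exactly $\D(V,\partial V;\xi)$ (immediate from the definition of the orbit map, since $\phi_*\xi = \xi$ iff $\phi$ is a contact transformation). The identification of the fiber is formal; all the work is in the local section, so the heart of the proof is the parametrized relative Gray stability lemma described above.
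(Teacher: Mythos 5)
Your proposal is correct and follows essentially the same route as the paper: both reduce the fibration property to the existence of continuous local sections of the orbit map, and both construct these sections by running a parametrized, boundary-relative Gray/Moser argument along canonical paths (linear interpolation in a convex chart of contact structures) while checking continuity in the parameter. The only cosmetic differences are that the paper invokes the Cerf--Palais criterion to pass from local sections to local triviality, where you write out the trivialization $(\eta, h) \mapsto \sigma(\eta)\circ h$ explicitly, and that you translate a section at the basepoint around the orbit rather than working near an arbitrary $\xi_0$ directly.
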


\begin{proof}
By the classical Cerf-Palais fibration criterion (see \cite[Lemma 2 p. 240, \S 
0.4.40]{Cerf_these} or \cite[Theorem~A]{Palais}), it suffices to show that the
above map admits a continuous local section near every point $\xi_0 \in \D (V,
\partial V) \bdot \xi$. Choose a vector field $\nu$ transverse to $\xi_0$ and
observe that $\xi_0$ has a convex open neighborhood $\cU$ in $\D (V, \partial V)
\bdot \xi$ which consists of contact structures transverse to $\nu$. Then Gray's
theorem associates to any contact structure $\xi_1 \in \cU$ an isotopy $\phi_t
\in \D (V, \partial V)$, $t \in [0,1]$, such that $\phi_0 = \Id$ and $\phi_{t*} 
\xi_0 = (1-t) \xi_0 + t \xi_1$ for all $t \in [0,1]$. Moreover, one can easily
arrange that this isotopy varies continuously with $\xi_1$. Indeed, it is
uniquely determined by a smooth path $\sigma_t$ of sections $V \to TV / \xi_t$
and, if this path depends continuously on $\xi_1$ (if $\sigma_t$ is identically 
$0$, for instance), then so does the corresponding isotopy, and the map $\xi_1
\mapsto \phi_1$ gives the required continuous section. 
\end{proof}

Assume from now on that the contact manifold $(V, \xi)$ has dimension $3$  and
let $F$ be a compact orientable surface properly embedded in $V$. Recall that
the characteristic foliation $\xi F$ of $F$ in $(V, \xi)$ is the (singular)
foliation spanned by the line field $\xi \cap TF$ (the singularities are the
points where $\xi = TF$). We denote by
\begin{itemize}
\item
$\P (F, V)$ the space of proper embeddings $F \to V$ which coincide with the
inclusion $\iota \from F \to V$ near $\partial F$;
\item
$\Po (F, V) \subset \P (F, V)$ the connected component of the inclusion $\iota$;
\item
$\P (F,V;\xi) \subset \P (F,V)$ the subspace of embeddings $\psi$ which induce
the same characteristic foliation as the inclusion, \emph{i.e.} satisfy 
$\xi\, \psi(F) = \psi_* (\xi F)$;
\item
$\Po (F, V; \xi)$ the intersection $\Po (F, V) \cap \P (F, V; \xi)$.
\end{itemize}
The same standard tools as in the proof of Lemma \ref{lemma:Gray} give the
following:

\begin{lemma} \label{lemma:fibration_PFxi}
Let $(V,\xi)$ be a compact contact manifold of dimension $3$. For every properly
embedded surface $F \subset V$, the restriction map 
$$ \D (V, \partial V; \xi) \to \P (F, V; \xi), \quad
   \phi \mapsto \phi \rst F, $$ 
is a locally trivial fibration over its image.
\end{lemma}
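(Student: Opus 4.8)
The plan is to follow the same pattern as the proof of Lemma~\ref{lemma:Gray}: by the Cerf--Palais criterion it suffices to produce, near every embedding $\psi_0$ in the image of the restriction map, a continuous local section $s$ satisfying $s(\psi)\rst{F} = \psi$. Composing with a fixed contactomorphism $\phi_0 \in \D(V, \partial V; \xi)$ with $\phi_0\rst{F} = \psi_0$, I may assume $\psi_0 = \iota$ is the inclusion, and construct the section on a small neighborhood $\cU$ of $\iota$ in $\P(F, V; \xi)$ normalized so that $s(\iota) = \Id$.

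The core geometric input is the contact neighborhood theorem for surfaces: the germ of $\xi$ along $F$ is determined, up to contactomorphism, by the characteristic foliation $\xi F$. Concretely, if $\psi \in \cU$ is $C^\infty$-close to $\iota$, then by the very definition of $\P(F, V; \xi)$ the embeddings $\psi$ and $\iota$ induce the same characteristic foliation on their (nearby) images, so there is a contactomorphism $\Phi_\psi$ from a fixed neighborhood $N$ of $\iota(F)$ onto a neighborhood of $\psi(F)$ extending $\psi$, i.e.\ $\Phi_\psi\rst{F} = \psi$. The standard Moser-type proof of this theorem, run along the linear path of contact forms interpolating the two germs, yields $\Phi_\psi$ depending continuously on $\psi$ and reducing to the identity at $\psi = \iota$. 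Since every $\psi \in \P(F, V)$ agrees with $\iota$ near $\partial F$, the contactomorphism $\Phi_\psi$ is the identity near $\partial F \subset \partial V$.

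It remains to promote the locally defined $\Phi_\psi$ to a global element of $\D(V, \partial V; \xi)$. Shrinking $\cU$ to a contractible neighborhood and fixing a continuous contraction onto $\iota$---a continuous family of paths $\psi^{(r)}$, $r \in [0,1]$, with $\psi^{(0)} = \iota$ and $\psi^{(1)} = \psi$---the assignment $r \mapsto \Phi_{\psi^{(r)}}$ is a contact isotopy $\Phi_\psi^r$ of $N$ from $\Id$ to $\Phi_\psi$, depending continuously on $\psi$ and constant near $\partial F$. Such an isotopy is generated by a time-dependent contact Hamiltonian $H_\psi^r \from N \to \RR$; multiplying $H_\psi^r$ by a fixed cutoff function equal to $1$ on a smaller neighborhood of $\iota(F)$ and supported in $N$ produces a contact isotopy of all of $V$ that agrees with $\Phi_\psi^r$ near $\iota(F)$ and is the identity outside $N$, hence in particular near $\partial V$. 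Its time-one map is a contactomorphism $s(\psi) \in \D(V, \partial V; \xi)$ with $s(\psi)\rst{F} = \Phi_\psi\rst{F} = \psi$ and $s(\iota) = \Id$, which is the required section.

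The step demanding the most care is the continuity of the whole construction in $\psi$: one must run the neighborhood theorem, the passage to a generating contact Hamiltonian, and the cutoff all in families, and check that the resulting assignment $\psi \mapsto s(\psi)$ is continuous for the relevant $C^\infty$ topologies and correctly normalized at $\psi = \iota$. As in Lemma~\ref{lemma:Gray}, this continuity is a routine consequence of the fact that each ingredient is extracted by solving a linear ODE whose data depend continuously---indeed smoothly---on $\psi$; the only genuinely contact-geometric input is the neighborhood theorem itself, which is why the statement can be obtained from ``the same standard tools'' as Lemma~\ref{lemma:Gray}.
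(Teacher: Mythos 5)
Your first step---the relative Moser/neighborhood-theorem construction of a local contactomorphism $\Phi_\psi$ extending $\psi$ near $F$, identity near $\partial F$---is sound, and it is essentially the same analytic input as the paper uses. But the globalization step has a genuine gap. To promote $\Phi_\psi$ to an element of $\D(V,\partial V;\xi)$ you invoke a ``continuous contraction'' of a neighborhood $\cU$ of $\iota$ \emph{inside} $\P(F,V;\xi)$: the intermediate embeddings $\psi^{(r)}$ must themselves lie in $\P(F,V;\xi)$, since the neighborhood theorem produces $\Phi_{\psi^{(r)}}$ only for embeddings printing the prescribed characteristic foliation. Local contractibility (even local path-connectedness in families) of $\P(F,V;\xi)$ at $\iota$ is neither obvious nor proved anywhere: the condition $\xi\,\psi(F)=\psi_*(\xi F)$ is a closed, highly non-generic constraint, linear interpolation of embeddings does not preserve it, and the subtle topology of precisely these spaces is the theme of Section~\ref{sec:disconnected} of the paper. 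You cannot drop the contraction either: without it you hold only a single germ of contactomorphism near $F$, and germs of contactomorphisms do not extend to global ones---only contact \emph{isotopies} extend, via cut-off Hamiltonians, which is exactly why your argument is forced through the contraction and breaks there.

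The paper's proof circumvents this by ordering the steps differently. It first extends $\psi_1$ \emph{smoothly}: the restriction map $\D(V,\partial V)\to\P(F,V)$ is already a locally trivial fibration by Cerf--Palais, so there is a continuous extension operator $\cV\to\D(V,\partial V)$, $\psi_1\mapsto\phi_1$, with no contractibility input whatsoever. It then corrects $\phi_1$ to a contactomorphism: since $\psi_1\in\P(F,V;\xi)$, the structures $\xi$ and $\phi_{1*}\xi$ print the same characteristic foliation on $\psi_1(F)$, the segment joining them consists of contact structures printing that same foliation (legitimate after normalizing $\phi_1$ to be close to a fixed extension of $\psi_0$, as in Lemma~\ref{lemma:Gray}), and the relative version of Gray's theorem yields an isotopy fixing $\psi_1(F)$ pointwise and a neighborhood of $\partial V$, carrying $\phi_{1*}\xi$ back to $\xi$ continuously in $\psi_1$; the composition is the desired section. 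If you replace your contraction step by this extend-smoothly-then-correct scheme---your Moser analysis becoming the Gray correction---your proof closes.
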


\begin{proof}
Each embedding $\psi_0 \in \P (F, V; \xi)$ which lies in the image is also in
the image of the restriction map
$$ \D (V, \partial V) \to \P (F, V), \quad
   \phi \mapsto \phi \rst F, $$ 
which is a locally trivial fibration by Cerf-Palais's fibration theorem. As a
result, there exists a neighborhood $\cV$ of $\psi_0$ in $\P (F, V; \xi)$ and a
continuous extension map $\cV \to \D (V, \partial V)$ which associates to every 
embedding $\psi_1 \in \cV$ a diffeomorphism $\phi_1 \in \D (V, \partial V)$ such
that $\psi_1 = \phi_1 \rst F$. Using Gray's theorem and the fact that embeddings
in $\cV$ induce the same characteristic foliation, it is easy to correct this
extension map to that it takes values in $\D (V, \partial V; \xi)$. We conclude
applying again the Cerf-Palais fibration criterion.
\end{proof}

\begin{remark} \label{rem:rel_neighb}
The above lemma is a typical result where it is useful to work relatively to a 
neighborhood of the boundary and not just to the boundary itself. Indeed, any
diffeomorphism relative to both $\partial V$ and a properly embedded surface $F$
is tangent to the identity along $\partial F$, and so the fibration property
fails in this case. However, since the inclusion of $\D (V, \partial V; \xi)$
into the group of contact transformations relative to the boundary is a homotopy
equivalence, this does not matter.
\end{remark}

We now recall how the theory of $\xi$-convex surfaces can be used to study the
homotopy type of $\P (F, V; \xi)$ (see \cites{Giroux_thesis, Giroux_transfo}).
Let $F$ be a compact orientable surface properly embedded in $(V, \xi)$ with
(possibly empty) Legendrian boundary; $F$ is \emph{$\xi$-convex} if it admits a 
\emph{homogeneous neighborhood}, namely a product neighborhood
$$ U := F \times \R \supset F = F \times \{0\}, \quad \text{with} \quad
   \partial U = \partial F \times \R \subset \partial V, $$
in which the vector field $\partial_t$, $t \in \R$, preserves $\xi$. The points
$p \in F$ where $\partial_t(p) \in \xi$ then form a multi-curve $\Gamma$ called 
the \emph{dividing set} of $F$ associated with $U$. This curve depends on $U$ and
its product structure but its isotopy class does not and is uniquely determined 
by the foliation $\xi F$: specifically, $\Gamma$ is the unique multi-curve (up
to isotopy) which avoids the singularities of $\xi F$, is transverse to $\xi F$ 
and divides $F$ into regions where the dynamics of $\xi F$ is alternatively
expanding and contracting (see \cites{Giroux_thesis, Giroux_transfo} for more
details). It follows that the curves dividing a given singular foliation on a
surface in the above sense form a contractible space. Among them, the dividing
sets associated with all possible homogeneous neighborhoods $U$ of $F$ are those 
intersecting $\partial F$ at the points where $\xi$ is tangent to $\partial V$. 
Moreover, we have the following proposition (see \cite[Lemmas 6 and 7]
{Giroux_transfo}):

\begin{proposition} \label{prop:divided_foliations}
Let $F$ be a $\xi$-convex surface, $U$ a homogeneous neighborhood, and $\Gamma$ 
the associated dividing set.

\item[\textrm{(a)}] 
The space $\F(F;\Gamma)$ of singular foliations on $F$ which are tangent to
$\partial F$ and admit $\Gamma_U$ as a dividing set is an open contractible
neighborhood of $\xi F$ in the space of all singular foliations on $F$.

\item[\textrm{(b)}]
There exists a continuous map $\F (F;\Gamma) \to \P (F,V)$, $\sigma \mapsto 
\psi_\sigma$, with the follwing properties:
\begin{enumerate}
\item
$\psi_{\xi F}$ is the inclusion $F \to V$;
\item
$\psi_\sigma(F)$ is contained in $U = F \times \R$  and transverse to the
contact vector field $\partial_t$ for all $\sigma \in \F (F;\Gamma)$;
\item
$\xi\, \psi_\sigma(F) = \psi_\sigma(\sigma)$ for all $\sigma \in \F (F;\Gamma)$.
\end{enumerate}

\item[\textrm{(c)}]
Let $\P(F,V;\Gamma)$ denote the space of embeddings $\psi \in \P(F,V)$ such that
$\psi(F)$ is $\xi$-convex with dividing set $\psi(\Gamma)$. Then the inclusion
$\P(F,V;\xi) \to \P(F,V;\Gamma)$ is a homotopy equivalence.
\end{proposition}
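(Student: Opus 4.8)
The three parts are naturally proved in sequence, the substance of (c) being a fibration argument fed by (a) and (b). Throughout I work with the map
\[ \Phi \from \P(F,V;\Gamma) \to \F(F;\Gamma), \quad \psi \mapsto \psi^*\bigl(\xi\,\psi(F)\bigr), \]
which records the characteristic foliation of the image surface, pulled back to $F$ through $\psi$. Since $\psi(F)$ is $\xi$-convex with dividing set $\psi(\Gamma)$, the foliation $\Phi(\psi)$ is divided by $\Gamma$, and because $\psi$ agrees with the inclusion near $\partial F$ it is tangent to $\partial F$; hence $\Phi$ is well defined. By construction $\P(F,V;\xi) = \Phi^{-1}(\xi F)$, so (c) amounts to showing that the inclusion of this fiber is a homotopy equivalence. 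The plan is to prove that $\Phi$ is a locally trivial fibration and then to exploit that its base is contractible.

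For (a) I would check openness and contractibility separately. The defining conditions of $\F(F;\Gamma)$---transversality of $\Gamma$ to the foliation, disjointness from the singular set, and the alternation of expanding and contracting dynamics on the two sides of $\Gamma$---are all governed by strict inequalities on a directing vector field, hence persist under small perturbations, which gives openness. For contractibility I would straighten every $\sigma \in \F(F;\Gamma)$ toward a fixed model foliation adapted to $\Gamma$ (for instance one directed by a gradient-like vector field with $\Gamma$ as separating level set), the straightening being canonical and continuous in $\sigma$; this is essentially \cite[Lemmas 6 and 7]{Giroux_transfo}. For (b) I would use the product structure of the homogeneous neighborhood $U = F \times \R$ and an invariant contact form for $\partial_t$: each $\sigma$ is realized as the characteristic foliation of a graph $\{(x,u_\sigma(x)) : x \in F\}$, where $u_\sigma$ comes from Giroux's realization procedure, depends continuously on $\sigma$, and satisfies $u_{\xi F} \equiv 0$. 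Properties (1)--(3) follow, transversality to $\partial_t$ being automatic for a graph. Note that this realization map is exactly a continuous section $s$ of $\Phi$ with $s(\xi F) = \iota$.

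For (c), a fibration over a contractible base is fiber-homotopy trivial, so once $\Phi$ is known to be a locally trivial fibration, contractibility of $\F(F;\Gamma)$ from (a) forces the inclusion of the fiber $\P(F,V;\xi) = \Phi^{-1}(\xi F)$ into $\P(F,V;\Gamma)$ to be a homotopy equivalence. To establish the fibration property I would follow the pattern of Lemmas \ref{lemma:Gray} and \ref{lemma:fibration_PFxi}: by the Cerf--Palais criterion it suffices to produce a continuous local section of $\Phi$ through every embedding $\psi_0 \in \P(F,V;\Gamma)$. Such a section is obtained by repeating the graph construction of (b) inside a homogeneous neighborhood of $\psi_0(F)$ rather than of $F$ itself, so that for every $\sigma$ near $\sigma_0 = \Phi(\psi_0)$ one gets an embedding realizing $\sigma$ and equal to $\psi_0$ at $\sigma = \sigma_0$.

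The main obstacle I expect is precisely this last step. Unlike the situations of Lemmas \ref{lemma:Gray} and \ref{lemma:fibration_PFxi}, where a transitive group action (the contactomorphism group acting on an orbit, resp.\ on the image of the restriction map) lets a single local section near each base point spread into a local trivialization, no group moves between the fibers of $\Phi$: composing $\psi$ with a contactomorphism leaves $\Phi(\psi)$ unchanged. One therefore genuinely needs sections through an \emph{arbitrary} convex embedding, realized uniformly and compatibly with the fixed behavior near $\partial F$. Carrying the graph construction out in a homogeneous neighborhood of $\psi_0(F)$, and verifying that the resulting section depends continuously on $\psi_0$ while respecting the boundary condition, is where the flexibility of convex surface theory is decisively used and where the Cerf--Palais packaging---trading local triviality for the mere existence of these local sections---earns its keep.
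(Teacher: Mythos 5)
You should first be aware that the paper contains no proof of this proposition: it is quoted wholesale from \cite{Giroux_transfo} (Lemmas 6 and 7), so your reconstruction can only be compared with that source and with how the neighbouring Lemmas \ref{lemma:Gray} and \ref{lemma:fibration_PFxi} actually deploy the Cerf--Palais criterion. Your treatment of (a) and (b) is in line with Giroux's lemmas: openness because dividing is defined by strict conditions on a directing vector field, contractibility by a canonical straightening, and realization by graphs $\{t = u_\sigma(x)\}$ in the invariant neighborhood $U = F \times \R$, depending continuously on $\sigma$ with $u_{\xi F} \equiv 0$. (One small caution there: an embedding in $\P(F,V)$ must equal the inclusion near $\partial F$, so $u_\sigma$ must vanish near $\partial F$ and the realized foliations agree with $\xi F$ there; the paper's statement is equally loose on this point, so it is a wrinkle to smooth rather than an error of yours.) Your reduction of (c) to the contractibility of the base of your map $\Phi$ is also the intended mechanism.

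The gap is exactly at the step you single out, and it is more serious than your phrasing suggests. The Cerf--Palais criterion, as used in Lemmas \ref{lemma:Gray} and \ref{lemma:fibration_PFxi}, is not the assertion that a continuous local section through every point makes a map a locally trivial fibration; it applies to maps equivariant under a group acting on the total space, together with local sections of the orbit map of the \emph{base} (Palais's Theorem A), and it is precisely the transitive action on the base that $\Phi$ lacks, as you yourself observe. Pointwise local sections through every $\psi_0$ are strictly weaker than local triviality: fibers of a map admitting such sections can still jump, so the sentence ``by Cerf--Palais it suffices to produce a continuous local section of $\Phi$ through every embedding $\psi_0$'' is not a valid reduction. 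What closes the argument is the joint continuity you flag but do not establish: a contact vector field positively transverse to $\psi_0(F)$ remains transverse to $\psi(F)$ for all nearby $\psi$, and since the positively transverse contact vector fields along a convex surface form a nonempty convex cone, a partition of unity produces a choice of transverse field --- hence of homogeneous neighborhood, hence of graph realization --- depending continuously on $\psi$ globally over $\P(F,V;\Gamma)$. Once this is in place, full local triviality of $\Phi$ is not needed for (c): lifting a based contraction of $\F(F;\Gamma)$ through the continuous realization yields a deformation retraction of $\P(F,V;\Gamma)$ onto $\P(F,V;\xi) = \Phi^{-1}(\xi F)$, which is all the proposition asserts. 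So the architecture of your proof is right and matches the cited source, but the fibration claim as you justify it would fail, and the continuity-in-$\psi_0$ you correctly identify as the crux needs the convexity argument (or an equivalent device) to be an actual proof rather than a stated hope.
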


We will also need the following result which shows that the homotopy type of
$\D (V, \partial V; \xi)$ is locally constant when $\partial V$ is $\xi$-convex
(see \cite[Proposition 8]{Giroux_transfo}:  

\begin{proposition} \label{prop:homotopy_depends_Gamma}
Let $V$ be a compact $3$-manifold, $\Delta$ a multi-curve on $\partial V$ and
$\CS(V,\Delta)$ the space of contact structures $\xi$ on $V$ for which 
$\partial V$ is $\xi$-convex with dividing set $\Delta$. For $\xi \in
\CS(V,\Delta)$, the homotopy type of $\D (V, \partial V; \xi)$ depends only on
the connected component of $\CS(V;\Delta)$ containing $\xi$.
\end{proposition}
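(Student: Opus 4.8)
The plan is to establish the stronger, \emph{local} statement that the homotopy type of $\D(V,\partial V;\xi)$ is locally constant on $\CS(V,\Delta)$; since this space of contact structures is open in the space of all contact structures on $V$, hence locally path-connected, a locally constant invariant is automatically constant on each connected component, which is the assertion. Concretely, I will show that every $\xi_0\in\CS(V,\Delta)$ admits a neighborhood $\cU$ such that for each $\xi_1\in\cU$ there is a diffeomorphism $\phi$ of $V$---automatically preserving $\partial V$, though \emph{not} fixing it near $\partial V$---with $\phi_*\xi_0=\xi_1$. Given $\xi_0,\xi_1$ in the same component, I then cover a joining path by finitely many such neighborhoods, compose the resulting diffeomorphisms, and obtain a single $\phi$ with $\phi_*\xi_0=\xi_1$.

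The comparison of the two groups is then purely formal. For any $\phi\in\D(V)$ preserving $\partial V$ with $\phi_*\xi_0=\xi_1$, I claim conjugation $c_\phi(\psi)=\phi\psi\phi^{-1}$ carries $\D(V,\partial V;\xi_0)$ isomorphically onto $\D(V,\partial V;\xi_1)$. Indeed, if $\psi$ equals the identity on a neighborhood $N$ of $\partial V$, then $c_\phi(\psi)$ equals the identity on $\phi(N)$, which is again a neighborhood of $\partial V$ since $\phi(\partial V)=\partial V$; and $(c_\phi(\psi))_*\xi_1=\phi_*\psi_*(\phi^{-1})_*\xi_1=\phi_*\psi_*\xi_0=\phi_*\xi_0=\xi_1$. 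As $c_{\phi^{-1}}$ furnishes a continuous inverse, $c_\phi$ is an isomorphism of topological groups, in particular a homotopy equivalence. Observe that only the preservation of $\partial V$ is used here, never that $\phi$ be relative to the boundary.

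It remains to produce $\phi$ for $\xi_1$ close to $\xi_0$, and this is where convexity does the work. Because $\partial V$ is $\xi_0$-convex, its characteristic foliation $\xi_0\partial V$ is of Morse--Smale type and hence structurally stable; consequently, for $\xi_1$ near $\xi_0$ the foliation $\xi_1\partial V$---which still has dividing set $\Delta$, so lies in the contractible space $\F(\partial V;\Delta)$ of Proposition~\ref{prop:divided_foliations}(a)---is conjugate to $\xi_0\partial V$ by a diffeomorphism of $\partial V$ close to the identity. Choosing a continuous family of homogeneous neighborhoods of $\partial V$ and invoking that a convex germ near $\partial V$ is determined, up to a collar isotopy, by its characteristic foliation (Proposition~\ref{prop:divided_foliations}), I promote this surface conjugacy to a collar isotopy of $V$, tangent to $\partial V$, after which $\xi_0$ and $\xi_1$ share the same germ along $\partial V$. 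The remaining path between them is then constant near the boundary, so Gray's theorem relative to the boundary extends it to an ambient isotopy of $V$ carrying $\xi_0$ to $\xi_1$; composing with the collar isotopy yields the desired $\phi\in\D(V)$ preserving $\partial V$.

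The main obstacle I expect is precisely this local germ-matching step near $\partial V$. The structural-stability conjugacy lives a priori only on the surface $\partial V$, and upgrading it to an ambient isotopy that identifies the \emph{full} germs of the two contact structures is exactly what forces the use of homogeneous neighborhoods together with the contractibility in Proposition~\ref{prop:divided_foliations}(a). It is also here that the fixed dividing set is indispensable: were $\Delta$ allowed to vary, nearby characteristic foliations on $\partial V$ would in general fail to be conjugate, no boundary-preserving $\phi$ with $\phi_*\xi_0=\xi_1$ could exist, and the homotopy type of $\D(V,\partial V;\xi)$ could genuinely jump.
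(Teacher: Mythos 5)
The paper offers no proof of this proposition at all---it is quoted from \cite[Proposition 8]{Giroux_transfo}---so your attempt must be judged on its own merits, and it contains a genuine error. Your formal reduction is fine as far as it goes: conjugation by any boundary-preserving $\phi$ with $\phi_*\xi_0=\xi_1$ is indeed an isomorphism of topological groups $\D(V,\partial V;\xi_0)\to\D(V,\partial V;\xi_1)$, and since $\CS(V,\Delta)$ is open in the space of contact structures (by Proposition~\ref{prop:divided_foliations}(a)), local constancy would imply constancy on components. The fatal step is the claim that $\xi_0$-convexity of $\partial V$ forces the characteristic foliation $\xi_0\,\partial V$ to be Morse--Smale, hence structurally stable. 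This is false: convexity is equivalent to being divided by some multi-curve, and divided foliations can be highly degenerate---for instance they may contain whole circles of singularities, as the paper itself exploits when it perturbs a torus ``so that the characteristic foliation $\xi T$ has a circle of singularities parallel to $\Gamma$''. Consequently your stronger local statement is simply not true: if $\xi_0\,\partial V$ has a circle of singularities, then arbitrarily close to $\xi_0$ in $\CS(V,\Delta)$ there are structures $\xi_1$ whose boundary foliation is Morse--Smale (deform $\xi_0$ in a collar, realizing a generic foliation in $\F(\partial V;\Delta)$), and \emph{no} diffeomorphism $\phi$ of $V$ can satisfy $\phi_*\xi_0=\xi_1$, because $\phi$ would restrict on $\partial V$ to a conjugacy between the two characteristic foliations, carrying singular circles to singular circles. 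A fixed dividing set makes the space $\F(\partial V;\Delta)$ of boundary foliations contractible; it does not make nearby foliations conjugate, contrary to what your closing paragraph asserts.

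The upshot is that the two groups in the proposition are homotopy equivalent but in general \emph{not} conjugate inside $\D(V)$, so any proof must compare them only up to homotopy rather than by an exact ambient diffeomorphism. The argument of \cite[Proposition 8]{Giroux_transfo} does this by deforming the contact structures (not the surface) in a collar of $\partial V$: given a path $(\xi_t)$ in $\CS(V,\Delta)$, the boundary foliations trace a path in the contractible space $\F(\partial V;\Delta)$, the realization/uniqueness-of-germ lemmas let one replace the path by one whose germ along $\partial V$ is constant---at the price of modifying the endpoints in a collar, and of a separate homotopy-theoretic comparison between a structure and its collar deformation---after which Gray's theorem applies relative to the boundary. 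Your germ-matching paragraph correctly identifies the relevant technology (homogeneous neighborhoods, Proposition~\ref{prop:divided_foliations}), but it is deployed after the false conjugacy step, and the strategy of producing a single conjugating $\phi$ cannot be repaired at degenerate points of $\CS(V,\Delta)$, which every component contains.
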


\section{Legendrian circle bundles over surfaces}
\label{sec:main}

\subsection{The general case}

In this section we consider a compact oriented surface $S$ which is neither a
sphere nor a torus. The torus case will be treated in Section~\ref{ss:torus}.
Actually, using results of \cite{Patrick_gcs}, the following discussion can be
carried over to orbifolds. 

As in the introduction, $(V_d, \xi_d)$ denotes the $d$-fold fibered cyclic cover
of the unit cotangent bundle $V := V_1 = T_1^*S$, equipped with the pullback of
the canonical contact strucutre $\xi$ of $V$.

In any trivialization of $\pi : V_d \to S$ over a subsurface $R \subset S$ with
non-empty boundary, the contact structure $\xi_d$ can be described as follows:
if $J$ denotes the complex structure on $S$ induced by our choice of a principal
bundle structure on $T_1^*S$ then the restricition of $\xi_d$ to $V_d \rst R
\simeq R \times \SS^1$ has the form
\[ \xi_{\lambda, d}
 = \ker \Big(\cos (d\theta) \lambda + \sin (d\theta)\lambda \circ J \Big), \quad
   \theta \in \SS^1, \]
where $\lambda$ is a non-singular $1$-form on $R$. In practice, we will take $R$
equal to $S$ if $S$ has non-empty boundary and to $S$ with an open disk removed
if $S$ is closed. 

Now observe that the preimage $F := \pi^{-1}(\gamma)$ of any properly embedded
curve $\gamma$ in $S$ is a $\xi_d$-convex surface in $V_d$. Indeed, any vector
field $X$ in $S$ transverse to $\gamma$ (and tangent to $\del S$) lifts to a
contact vector field $\bar X$ transverse to $F$ (and tangent to $\del V_d$). The
dividing set of $\xi_d F$ associated with $\bar X$ is the set of points in $F$
where $\xi_d$ projects down (by the differential of $\pi$) to the line spanned
by $X$.

If $\gamma$ is contained in the subsurface $R$, the trivialization of $V_d \rst
R$ induces a diffeomorphism $F = \pi^{-1}(\gamma) \simeq \gamma \times \SS^1$.
Then the dividing set, provided all its components are consistently oriented (as
parallel curves), represents the homology class $2 (d, x-1) \in \ZZ^2 = H_1 
(\gamma \times \SS^1)$, where $x$ denotes the index of $\lambda$ along $\gamma$.

The following proposition can be proved using \cite[Lemma~4.7]{Giroux_bif} 
(a special case of the semi-local Bennequin inequality proved
later as \cite[Proposition~4.10]{Giroux_cercles}) exactly as in 
\cite[Lemma~3.9]{Giroux_cercles} which dealt with circle bundles without
boundary.

\begin{proposition} \label{prop:decoupage}
Let $F$ be a torus fibered over a homotopically essential circle in $S$ and
$\Gamma$ a dividing set for $\xi_d F$. For any isotopy $\varphi$ such that
$\varphi_1(F)$ is also $\xi_d$-convex, the foliation $\xi_d \varphi_1(F)$ is
divided by a collection of curves isotopic to the components of
$\varphi_1(\Gamma)$.
\end{proposition}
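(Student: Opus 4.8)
The plan is to prove that, for these tight structures, the dividing slope of a convex torus in the isotopy class of $F$ is rigid, and that this slope is the one recorded by $\Gamma$. First I would note that $\xi_d$ is tight: the canonical contact structure on $T_1^*S$ is universally tight, so its pullback to the cover $V_d$ cannot contain an overtwisted disk, since such a disk would lift to the common universal cover. Consequently every convex surface in $(V_d,\xi_d)$ has a tight neighborhood, and Giroux's criterion applies to the convex torus $\varphi_1(F)$: its dividing set $\Gamma'$ has no homotopically trivial component. Since pairwise disjoint essential simple closed curves on a torus are automatically mutually parallel, $\Gamma'$ consists of parallel essential curves, all of a single slope. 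The proposition therefore reduces to showing that this slope, carried back to $F$ by the isotopy, is the slope of $\Gamma$, i.e. the direction of the class $2(d,x-1)$.

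The heart of the matter is this slope rigidity, and it is exactly where the semi-local Bennequin inequality is used. Because $\gamma$ is homotopically essential, $F=\pi^{-1}(\gamma)$ is incompressible in $V_d$, which is what makes the inequality of \cite[Lemma~4.7]{Giroux_bif} nonvacuous on the closed torus. The key geometric input is that the fibers of $\pi$ lying on the convex torus are Legendrian; I would use them, or Legendrian rulings isotopic to them, to detect the slope, since on a convex torus the geometric intersection number of such a curve with the dividing set is governed by its Thurston--Bennequin invariant relative to the surface framing. Following the scheme of \cite[Lemma~3.9]{Giroux_cercles}, I would feed a suitable annulus with Legendrian boundary on $\varphi_1(F)$ into the semi-local Bennequin inequality to bound this twisting from above, then check that the standard torus $F$ already realizes the bound from the explicit form $\xi_{\lambda,d}$ in the trivialization over $R$. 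Tightness forces the same equality for $\varphi_1(F)$, pinning its dividing slope to that of $\Gamma$.

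To conclude, since $\varphi$ is an isotopy, $\varphi_1$ is isotopic to the identity and hence sends the slope of $\Gamma$ to the slope of $\varphi_1(\Gamma)$; combined with the rigidity above, every component of $\Gamma'$ is then isotopic in $\varphi_1(F)$ to a component of $\varphi_1(\Gamma)$, which is the claim. The step I expect to be the main obstacle is the clean semi-local application of the Bennequin inequality: one must position the auxiliary Legendrian curve and annulus in the complement of $\varphi_1(F)$ so that the inequality both applies and is sharp, and one must argue intrinsically on the possibly wildly embedded torus $\varphi_1(F)$---using only tightness and incompressibility---rather than trying to follow the dividing set along $\varphi$ itself. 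Confirming that $F$ saturates the Bennequin bound is the secondary point demanding care, but it is a direct computation in the model $\xi_{\lambda,d}$.
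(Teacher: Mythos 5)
Your skeleton is the paper's: the authors prove this proposition by the scheme of \cite[Lemma~3.9]{Giroux_cercles}, with the semi-local Bennequin inequality (\cite[Lemma~4.7]{Giroux_bif}, later \cite[Proposition~4.10]{Giroux_cercles}) as the key input, and your preliminary steps---universal tightness of $\xi_d$, Giroux's criterion forcing the dividing set $\Gamma'$ of $\varphi_1(F)$ to consist of parallel essential curves, incompressibility of $F$, transport of framings and homology classes by $\varphi_1$---are all correct and match. But your slope-pinning mechanism has a genuine gap. You test with Legendrian curves isotopic to the fiber and argue that, since $F$ saturates the twisting bound $t(L)\le -d$, ``tightness forces the same equality'' on $\varphi_1(F)$; that is a non sequitur, and moreover equality does not pin the slope. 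Concretely, in a basis of $H_1$ where the fiber class is $(0,1)$ and $\Gamma$ consists of $2k$ curves of primitive class $(a,b)$ with $k(a,b)=(d,x-1)$, the Legendrian realization of the fiber class on a convex torus divided by $2k'$ curves of class $(a',b')$ has twisting $-k'|a'|$. The bound only gives $k'|a'|\ge d$, and a dividing set of $2d$ curves of class $(1,b')$ satisfies it \emph{with equality for every} $b'$: the fiber's twisting sees only $k'|a'|$ and is blind to $b'$, so no amount of sharpness in this one inequality can identify the direction of $\Gamma'$ with that of $\varphi_1(\Gamma)$.

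The fix---and what ``exactly as in Lemma~3.9'' amounts to---is to test in the transverse direction: use the realization lemma (Proposition~\ref{prop:divided_foliations}b) to perturb $\varphi_1(F)$ so that its characteristic foliation has a circle of singularities (a Legendrian divide) $L$ parallel to $\Gamma'$, along which $\xi_d$ does not twist relative to the torus, so $t(L)=0$ in the surface framing. Transported by the isotopy, $L$ is isotopic to a curve $C$ of class $(a',b')$ on $F$, and the semi-local Bennequin inequality gives $0 = t(L) \le -\tfrac12\#(C\cap\Gamma) = -k\,|a'b-ab'|$, forcing $a'b=ab'$, i.e.\ $(a',b')$ parallel to $(a,b)$, which is exactly the claim (and all that is claimed: the \emph{number} of dividing curves may change under isotopy, as the paper stresses in Section~\ref{sec:disconnected}, so nothing stronger than component-wise isotopy can be pinned). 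The paper itself uses both mechanisms elsewhere: the divide trick in the claim of Section~3.2, the fiber-twisting bound only in tandem with Pick's formula and an auxiliary convex torus with $a<d$ in Proposition~\ref{prop:tores_Paolo_Patrick}. Finally, the difficulty you flag is real: since $\varphi_1(F)$ need not lie in a homogeneous neighborhood of $F$, one must first pass to the cover of $S$ associated with the circle, lift and cut off the isotopy so that everything takes place in a compact piece of $\TT^2\times\RR$ between far-away fibered tori---the same reduction as in the proof of Lemma~\ref{lemma:embeddings_torus}---before the semi-local inequality can be applied.
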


We now turn to spaces of embeddings of surfaces. The following lemma is useful
to prove the existence of contact transformations which are smoothly but not
contact isotopic to the identity:

\begin{lemma} \label{lemma:embeddings_torus}
Let $T$ be a fibered torus over a homotopically non-trivial circle $C$ in $S$
and $i \co T \to V_d$ the inclusion map. Let $R_t$ be the action of $e^{2i\pi 
t}$ on $V_d$. For any non-zero integer $k$ in $\ZZ$, the path 
$\gamma_k \co [0,1] \to \Po (T,V_d)$ defined by $\gamma_k(t) = R_{kt/d} \circ i$
is non-trivial in $\pi_1 (\Po (T,V_d), \Po(T,V_d;\xi_d))$.

For any integer $k$ between $1$ and $d-1$, the action of $R_{k/d}$ on
$\pi_0 (\Po (T,V_d;\xi_d))$ is non-trivial.
\end{lemma}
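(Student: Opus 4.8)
The plan is to unwind the phrase ``the action is non-trivial'' into a concrete separation statement: I want to show that for $1\le k\le d-1$ the self-map $\psi\mapsto R_{k/d}\circ\psi$ of $A:=\Po(T,V_d;\xi_d)$ moves path components, and in particular that $R_{k/d}\circ\iota$ and $\iota$ lie in \emph{different} components of $A$. First I would record that $R_{k/d}$ really does act on $A$: it is a contactomorphism (a deck transformation of $V_d$ over $T_1^*S$), it preserves $T=\pi^{-1}(C)$ setwise, and post-composition sends $\Po(T,V_d;\xi_d)$ to itself because the first part of the lemma provides the path $\gamma_k$ joining $\iota$ to $R_{k/d}\circ\iota$ inside $\Po(T,V_d)$. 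The elementary observation underlying everything is that $\iota$ and $R_{k/d}\circ\iota$ have the \emph{same image} $T$ and induce the \emph{same characteristic foliation} $\xi_d T$ — the fibre rotation $R_{k/d}$ is a symmetry of $\xi_d T$, cyclically permuting its tangency locus — so the two embeddings differ only by precomposition with the foliated self-map $R_{k/d}\rst T$. Since $R_{k/d}$ is ambiently isotopic to the identity through the $R_t$, no homological or ambient-isotopy invariant of the image can separate them; any separating invariant must exploit the rigidity of remaining inside $A$.

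The invariant I would build is a fibrewise rotation count with values in $\ZZ/d$. A direct computation in the local model $\cos(d\theta)\lambda+\sin(d\theta)\lambda\circ J$ shows that over $C$ the tangency locus of $\xi_d T$ consists of $2d$ closed curves, cyclically ordered along the fibre and alternating in sign, while the dividing set $\Gamma$ realises the class $2(d,x-1)$; moreover $R_{k/d}$ shifts this cyclic configuration by $2k$ positions, which is non-zero modulo $2d$ exactly when $1\le k\le d-1$. The goal is then to manufacture a locally constant function $\mu\colon \pi_0(A)\to\ZZ/d$ which increases by $k$ under the action of $R_{k/d}$, so that $\mu([R_{k/d}\circ\iota])=\mu([\iota])+k\neq\mu([\iota])$; non-triviality of the action is then immediate. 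The value group is $\ZZ/d$ rather than $\ZZ$ because the full fibre rotation $R_1$ is the identity of $V_d$, which forces the relation $\mu+d=\mu$. Concretely, $\mu$ measures — relative to the framing pulled back from the fibration $\pi_d$ — how far the tangency locus of $\xi_d\,\psi(T)$ has turned in the fibre direction. To compare a general $\psi\in A$ with $\iota$ I would transport the dividing set along a path: promote the path to one of \emph{convex} embeddings with dividing set $\psi_s(\Gamma)$ using Proposition \ref{prop:divided_foliations}(c), extend it to an ambient isotopy, and control the dividing set along it by Proposition \ref{prop:decoupage}.

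The hard part will be proving that this count is genuinely locally constant on $A$ — that a characteristic-foliation-preserving isotopy cannot silently re-shuffle the tangency locus or re-isotope the dividing curves so as to absorb the $2k$-shift, and that the rotation is path-independent modulo $d$. This is precisely the rigidity packaged in Proposition \ref{prop:decoupage} (no dividing curve is created or destroyed, and the isotopy class of the dividing set is pinned under any convex isotopy), and translating ``position of the tangency locus'' into the robust language of dividing sets is exactly what the convex-surface dictionary of Proposition \ref{prop:divided_foliations} is for; I expect most of the labour to lie in making $\mu$ additive and well defined under the permitted deformations. It is worth contrasting this with the first part of the lemma: the same rotation mechanism already detects $\gamma_k$ in the relative set $\pi_1\!\bigl(\Po(T,V_d),\Po(T,V_d;\xi_d)\bigr)$, but there the endpoint is allowed to wander over all of $A$, whereas here I must additionally rule out that the shift is undone by a loop of embeddings — this is the genuinely new content, and the reason the $\pi_0$ statement does not follow formally from the $\pi_1$ one. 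The payoff is then immediate: by the restriction fibration of Lemma \ref{lemma:fibration_PFxi}, any contact isotopy from $R_{k/d}$ to the identity would yield a path in $A$ from $R_{k/d}\circ\iota$ back to $\iota$, so the separation just established shows that the deck transformation $R_{k/d}$ is not contact isotopic to the identity.
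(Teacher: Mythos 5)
Your plan founders exactly at the step you yourself flag as ``the hard part'': the invariant $\mu$ is never shown to be well defined on $\pi_0(\Po(T,V_d;\xi_d))$, and the tool you assign to that task cannot do it. Proposition~\ref{prop:decoupage} pins down only the \emph{isotopy class} in the image surface of the dividing curves, and this datum does not distinguish $i$ from $R_{k/d}\circ i$: since $R_{k/d}$ is a contactomorphism preserving $T$, the two embeddings have the same image, the same characteristic foliation, and dividing sets carried to one another by a fiber rotation, hence isotopic in $T$. So no invariant factoring through isotopy classes of dividing sets or of the tangency locus --- which is all that Propositions~\ref{prop:divided_foliations} and~\ref{prop:decoupage} control --- can separate the two components; the ``position relative to a framing, modulo rotation'' refinement you want is precisely what this soft convex-surface bookkeeping does not see. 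The missing ingredient is a genuine Legendrian non-displaceability theorem, and the paper says so explicitly: the lemma ``will be reduced'' to Proposition~\ref{prop:LegT2R} (quoted from \cite{Paolo_T3}; alternatively one can use Eliashberg--Hofer--Salamon). Concretely, the paper argues: if a path in $\Po(T,V_d;\xi_d)$ joined $i$ to $\gamma_k(1)$, the fibration of Lemma~\ref{lemma:fibration_PFxi} would lift it to a contact isotopy of $V_d$; this isotopy is lifted to the annulus cover $\hat S\to S$ determined by $\langle C\rangle\subset\pi_1(S)$, cut off (Libermann) to have support in a compact piece fibered over a closed sub-annulus, that piece is embedded in the model $\bigl(\TT^3,\ker(\cos(2d\pi z)dx-\sin(2d\pi z)dy)\bigr)$, and a final lift to $\TT^2\times\RR$ produces a contact isotopy moving a Legendrian circle from height $0$ to a nonzero integer height, contradicting Proposition~\ref{prop:LegT2R}. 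That imported theorem is exactly the rigidity your $\mu$ would need; without it (or an equivalent substitute) your construction proves nothing, since an a priori possible foliation-preserving isotopy absorbing the shift is never excluded.

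Second, you leave the first statement of the lemma essentially unproved, and you invert the paper's logic. In the paper the relative-$\pi_1$ statement for \emph{all} $k\neq 0$ is the primary result --- the contradiction hypothesis there is precisely the existence of a path in $\Po(T,V_d;\xi_d)$ from $i$ to $\gamma_k(1)$ --- and the $\pi_0$ statement then \emph{does} follow at once from the long exact sequence of the pair, contrary to your closing claim that the $\pi_0$ statement is the genuinely new content. Note also that for $k$ a nonzero multiple of $d$ one has $R_{k/d}=\Id$, so $\gamma_k$ is a loop and any $\ZZ/d$-valued invariant such as your $\mu$ is structurally blind to it: detecting this class is an integer winding phenomenon, visible only after unwrapping the fiber direction --- which is why Proposition~\ref{prop:LegT2R} is stated in $\TT^2\times\RR$ rather than in $\TT^3$. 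Hence even granting well-definedness of $\mu$, your framework could at best recover the second statement, not the lemma as stated.
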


The above lemma will be reduced to the following statement.

\begin{proposition}[{\cite[Proposition~7.1]{Paolo_T3}}] \label{prop:LegT2R}
In $\big(\TT^2 \times \RR, \ker \big(\cos(2n\pi z)dx - \sin(2n\pi z)dy\big)
\big)$, the Legendrian circles 
$\{0\} \times \SS^1 \times \{0\}$ and $\{0\} \times \SS^1 \times \{k\}$ are not
contact isotopic for any $k \ne 0$.
\end{proposition}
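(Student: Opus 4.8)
The plan is to argue by contradiction, using convex surface theory in the spirit of the semi-local Bennequin inequality invoked above for Proposition~\ref{prop:decoupage}. First I would record the geometry of the model. The contact structure $\xi_n = \ker(\cos(2n\pi z)\,dx - \sin(2n\pi z)\,dy)$ admits the global frame $e_1 = \sin(2n\pi z)\,\partial_x + \cos(2n\pi z)\,\partial_y$, $e_2 = \partial_z$, so each torus $T_z = \TT^2\times\{z\}$ is pre-Lagrangian, carrying the linear characteristic foliation directed by $e_1$; as $z$ runs from $0$ to $k$ this direction performs exactly $nk$ full turns. Both $L_0$ and $L_k$ are closed leaves of the characteristic foliations of $T_0$ and $T_k$, and after a $C^\infty$-small perturbation these tori become $\xi_n$-convex with two dividing curves of slope $(0,1)$. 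By the Legendrian isotopy extension theorem, a contact isotopy carrying $L_0$ to $L_k$ is the same as an ambient contact isotopy $\phi_t$ with $\phi_1(L_0)=L_k$; I assume such a $\phi_t$ exists and seek a contradiction.

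Next I would compare interpolating annuli. The product annulus $A = \{0\}\times\SS^1\times[0,k]$ has characteristic foliation $\ker(\sin(2n\pi z)\,dy)$, which degenerates precisely along the $2nk-1$ interior circles $z=m/(2n)$ where $A$ is tangent to $\xi_n$. Convexifying $A$ rel its Legendrian boundary (Proposition~\ref{prop:divided_foliations}) forces a dividing set consisting of a boundary-parallel family of closed curves whose number is of order $nk$, reflecting the $nk$-fold rotation of the planes across the annulus. The trace $A' = \bigcup_t \phi_t(L_0)$ is a second annulus with the same Legendrian boundary; being swept by Legendrian curves, its dividing set is governed by the contact framing along the isotopy and should carry no such essential boundary-parallel curves.

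To produce the contradiction I would invoke the semi-local Bennequin inequality exactly as in Proposition~\ref{prop:decoupage}, which guarantees that the dividing set of a fibered convex torus is preserved (up to isotopy) when it is moved to another convex position. Applied to the tori $T_z$ and to the convex annuli above, this should show that the $\sim nk$ essential dividing curves of $A$ cannot be created or destroyed by bypasses inside the \emph{tight} manifold $(\TT^2\times\RR,\xi_n)$, so the existence of $A'$ without them is impossible; equivalently, the region caught between $A$ and $A'$ would carry a definite amount of Giroux torsion that the isotopy pretends to annihilate.

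The hard part will be establishing that this winding count is a genuine contact-isotopy invariant rather than an artifact of the product annulus. Every \emph{homotopical} invariant vanishes here: one checks directly that the rotation number of $TL$ with respect to the global frame $(e_1,e_2)$ is $0$ for both $L_0$ and $L_k$, so the obstruction is necessarily non-homotopical and must be extracted from tightness. Concretely, the key point to establish is that no sequence of bypasses can push the number of essential dividing curves on a convex annulus cobounded by $L_0$ and $L_k$ below $nk$, for an \emph{arbitrary} (not necessarily product) convex interpolating surface. This is where Proposition~\ref{prop:decoupage} and the semi-local Bennequin inequality do the real work, and it is the step I expect to require the most care.
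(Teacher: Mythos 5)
This proposition is not proved in the paper at all: it is imported from \cite[Proposition~7.1]{Paolo_T3}, with the Eliashberg--Hofer--Salamon displacement result mentioned as an alternative. So your attempt must be judged on its own and against the closely analogous argument the paper runs for tori in Proposition~\ref{prop:isotopiesT3}. Your instincts are sound in two respects: the rotation-number computation showing that all homotopical invariants vanish is correct, and Giroux torsion trapped between the two levels is indeed the relevant contact-geometric quantity. But two pivotal steps fail as stated. First, the ``trace'' $A' = \bigcup_t \phi_t(L_0)$ is not an embedded surface: the curves $\phi_t(L_0)$ for distinct $t$ intersect one another in general, so $A'$ has no characteristic foliation and no dividing set, and cannot be fed into convex surface theory. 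Moreover, any honest embedded annulus cobounded by $L_0$ and $L_k$ is smoothly isotopic (up to Dehn twists) to the product annulus $A$, and once made $\xi_n$-convex it too carries closed core-parallel dividing curves; the intended contrast between $A$ and $A'$ therefore evaporates.

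Second, and more seriously, your declared ``key point'' --- that no isotopy can push the number of core-parallel dividing curves on an interpolating convex annulus below $nk$, because dividing curves ``cannot be created or destroyed by bypasses inside the tight manifold'' --- is precisely the assertion that is false in general, and the tools you invoke do not deliver it. Proposition~\ref{prop:decoupage} and the semi-local Bennequin inequality control only the isotopy class of the \emph{components} of the dividing set of a fibered torus, i.e.\ their direction, never their number; and Proposition~\ref{prop:isotopiesT3} of this very paper exhibits, in exactly these models, discretized isotopies that change the number of dividing curves without changing their direction. The erratum in Section~3.1 explains that an unjustified persistence assumption of just this type is what invalidated the proofs in \cite{Giroux_transfo}; your outline reproduces that gap rather than filling it. A workable route --- the one used for the torus analogue in the proof of Proposition~\ref{prop:isotopiesT3}, and in essence the strategy of \cite{Paolo_T3} --- avoids counting dividing curves on interpolating surfaces altogether: a contact isotopy with $\phi_1(L_0) = L_k$ may be cut off to have compact support, and then $\phi_1$ identifies large compact pieces of $(\TT^2 \times \RR, \xi_n)$ sitting on opposite sides of the displacement, whose Giroux torsions differ; this contradicts the classification of tight contact structures on thickened tori and on $\TT^3$. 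As written, your proposal correctly names the invariant but proves neither of the two claims on which the contradiction rests.
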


Alternatively, one could use the stronger result due to Eliashberg, Hofer and
Salamon \cite{Eliashberg_Hofer_Salamon} saying that, 
in $\big(\TT^3, \ker\big( \cos(2n\pi z)dx - \sin(2n\pi z)dy\big)\big)$, the
Legendrian circle $\{0\} \times \SS^1 \times \{0\}$ cannot be displaced from the
pre-Lagrangian torus $\SS^1 \times \SS^1 \times \{0\}$ by a contact isotopy.
However this result uses holomorphic curves in symplectizations so it has a
different flavor
from the techniques we use in this paper.

\begin{proof}[Proof of Lemma~\ref{lemma:embeddings_torus}]
We first explain how to reduce the first statement to a statement in a thickened
torus.
Suppose for contradiction that there is an path $j$ in $\Po(T, V_d; \xi_d)$
from the inclusion to $\gamma_k(1)$. The path lifting property of the fibration
$\D(V; \xi_d) \to \P(T, V_d; \xi_d)$ of Corollary \ref{lemma:fibration_PFxi}
gives a contact isotopy $\varphi_t$ such that 
$\gamma_k(t) = \varphi_t \circ i$. 
Let $p \co \hat S \to S$ be the covering map associated to the subgroup
generated by $C$ in $\pi_1(S)$ and let $\hat V$ be the induced circle bundle
over $\hat S$.  We denote by $\hat T$ the compact component of $p^{-1}(T)$. The
contact isotopy $\varphi$ lifts to a contact isotopy $\hat \varphi$ for the
induced contact structure $p^*\xi_d$. The interior of $\hat S$ is an open
annulus and contains a closed sub-annulus $A$ such that $\hat\varphi_t(\hat T)$
stays above $A$ for all $t$.  One can then cut-off $\hat\varphi_t$ using
Libermann's theorem to get a contact isotopy with support in $A \times \SS^1$.
In $H_1(\hat T, \ZZ)$, we consider a $\ZZ$-basis $(S, F)$ where $F$ is the
homology class of fibers. The torus $\hat T$ has circles of singularities which
can be oriented consistently to get a total homology class $2(d S + m F)$ where
$m$ is an unknown integer.  After pulling back everything under a $d$-fold
covering map from $A$ to itself, we can assume there are exactly $2d$ circles
of singularities.  The circle bundle over $A$ then embeds into $\TT^3$ equipped
with $\ker\big( \cos(2d\pi z)dx - \sin(2d\pi z)dy\big)$ so it is sufficient to
get a contradiction there.  Note that we don't make any claim concerning how
the circle bundle structure embeds inside $\TT^3$ and we won't use it.

We now consider the covering map $\TT^2 \times \RR \to \TT^3$ sending 
$(x, y, s)$ to $(x, y, s \mod \ZZ)$. The contact isotopy $\hat\varphi$ lifts to
a contact isotopy contradicting Proposition~\ref{prop:LegT2R}.

The statement about $\pi_0(\Po(T, V_d; \xi_d))$ follows immediately from what
we proved and the long exact sequence of the pair 
$(\Po(T, V_d), \Po(T, V_d; \xi_d))$ since the path corresponding to $k$ between
$1$ and $d - 1$ does not come from a loop in $\Po(T, V_d)$.
\end{proof}

\begin{proposition}
\label{prop:tores_Paolo_Patrick}
If $T$ is a fibered torus over a non-separating embedded circle in $S$ then 
the group of deck transformations of $V_d \to V$ acts freely and 
transitively on $\pi_0(\Po(T, V_d; \xi_d))$. If $A$ is a fibered annulus over a
non-separating properly embedded arc in $S$ then $\Po(A, V_d; \xi_d)$ is connected.
\end{proposition}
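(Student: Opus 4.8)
The plan is to establish both claims by reducing them to the connectedness/component-counting results already available for $\pi_0(\Po(T,V_d;\xi_d))$, using the dividing-set technology of Proposition~\ref{prop:divided_foliations} and the rigidity statement of Proposition~\ref{prop:decoupage} together with the action of deck transformations.

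\begin{proof}[Proof proposal]
The first statement concerns a fibered torus $T$ over a non-separating circle $C \subset S$. The plan is to show first that the deck group acts freely, and then that it acts transitively. For freeness, I would argue that a non-trivial deck transformation $R_{k/d}$ with $1 \le k \le d-1$ cannot fix any component of $\Po(T,V_d;\xi_d)$: this is precisely the content of the second assertion in Lemma~\ref{lemma:embeddings_torus}, which says the action of $R_{k/d}$ on $\pi_0(\Po(T,V_d;\xi_d))$ is non-trivial. Since the deck group is cyclic of order $d$ and generated by $R_{1/d}$, any non-identity element is of this form, so no non-identity deck transformation fixes a component, i.e.\ the action is free.

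For transitivity, I would first fix a homogeneous neighborhood $U = T \times \R$ of $T$ and let $\Gamma$ be the associated dividing set, recording its homology class $2(dS + mF)$ as in the discussion preceding Lemma~\ref{lemma:embeddings_torus}. Given any two embeddings $\psi_0, \psi_1 \in \Po(T,V_d;\xi_d)$, the images $\psi_i(T)$ are $\xi_d$-convex with dividing sets $\psi_i(\Gamma)$; since both lie in $\Po(T,V_d)$, they are smoothly isotopic to the inclusion, and by Proposition~\ref{prop:decoupage} the dividing sets are forced (up to isotopy) into a fixed collection of parallel curves realizing the class $2(dS+mF)$. The remaining freedom is a rotation of the fiber direction, so the only obstruction to connecting $\psi_0$ to $\psi_1$ inside $\Po(T,V_d;\xi_d)$ is a fiberwise rotation by some multiple of $2\pi/d$, which is exactly the ambiguity recorded by the deck group. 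Thus the components of $\Po(T,V_d;\xi_d)$ are permuted transitively by $\{R_{k/d}\}_{k=0}^{d-1}$, giving transitivity. Combined with freeness, the deck group acts simply transitively, so $\pi_0(\Po(T,V_d;\xi_d))$ has exactly $d$ elements.

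For the annulus statement, let $A$ be a fibered annulus over a non-separating properly embedded arc $a \subset S$ with $\partial a \subset \partial S$. The key structural difference is that $A$ has non-empty Legendrian boundary, and embeddings in $\P(A,V_d)$ are required to agree with the inclusion near $\partial A$. This boundary condition pins down the fiber coordinate along $\partial A$, so there is no room for a non-trivial fiberwise rotation: the deck-transformation ambiguity that produced $d$ distinct components in the torus case is now killed by the relative-to-boundary constraint. I would therefore run the same dividing-set argument—using Proposition~\ref{prop:divided_foliations}(c) to pass from $\P(A,V_d;\xi_d)$ to $\P(A,V_d;\Gamma)$ and Proposition~\ref{prop:decoupage}-type rigidity of the dividing curves—to conclude that any $\psi \in \Po(A,V_d;\xi_d)$ can be connected to the inclusion, yielding connectedness.

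The main obstacle I anticipate is transitivity in the torus case: turning the heuristic ``the only remaining freedom is a fiberwise rotation'' into a rigorous statement. Concretely, after using Proposition~\ref{prop:decoupage} to normalize the dividing set of $\psi_1(T)$ to coincide (up to isotopy) with that of $\psi_0(T)$, one must still produce an actual path in $\Po(T,V_d;\xi_d)$ realizing this normalization, and then identify the residual mismatch with a deck transformation rather than some more complicated self-map of the torus. I would handle this by combining the homotopy equivalence $\P(T,V_d;\xi_d) \simeq \P(T,V_d;\Gamma)$ from Proposition~\ref{prop:divided_foliations}(c) with the path-lifting property of the fibration of Lemma~\ref{lemma:fibration_PFxi}, reducing the residual freedom to an element of the mapping class group of the fiber bundle that is visibly realized by fiber rotation, and then invoking Lemma~\ref{lemma:embeddings_torus} to see that distinct rotations by multiples of $2\pi/d$ indeed land in distinct components.
\end{proof}
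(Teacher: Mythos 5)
There is a genuine gap, and it sits exactly where the paper says the difficulty lies. Your transitivity argument for the torus case rests on the claim that Proposition~\ref{prop:decoupage} forces the dividing set of $\psi_1(T)$ to be isotopic to $\psi_1(\Gamma)$, so that ``the only remaining freedom is a fiberwise rotation''. But Proposition~\ref{prop:decoupage} only controls the isotopy class of each \emph{component} of the dividing set; it says nothing about the \emph{number} of dividing curves, and smooth isotopies can create or destroy pairs of parallel dividing curves (this is precisely the phenomenon exhibited in Section~\ref{sec:disconnected}, e.g.\ the discretized isotopies of Proposition~\ref{prop:isotopiesT3} that change the number of curves without changing their direction, and the erratum explains that no general retractible-neighborhood argument can exclude such births and deaths). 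Note also that your argument nowhere uses that $C$ is \emph{non-separating}, only that it is essential --- so if it worked it would prove a stronger statement than the paper's, and the paper explicitly flags non-separation as indispensable. The paper's actual mechanism is different: since $C$ is non-separating there is a second fibered torus $F$ meeting $T$ in a single fiber, and the trick of Ghiggini and Massot (\cite[Proposition 5.4]{Paolo_thesis}, \cite[Lemma 8.5]{Patrick_gcs}) discretizes the motion of $F$ to build a contact isotopy $\varphi'$ with $\varphi'_1(T)=\varphi_1(T)$; only then does one correct the parametrization mismatch by a deck transformation and a final contact isotopy. Your ``residual mismatch'' step presupposes a contact isotopy carrying $T$ to $\varphi_1(T)$ already exists, which is the whole problem. (Your freeness step is essentially right, with the small caveat that Lemma~\ref{lemma:embeddings_torus} only says $R_{k/d}$ moves the component of the inclusion; one gets freeness at every component by combining this with transitivity and commutativity of the deck group, which is the order the paper argues in.)

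The annulus case is also not ``the same dividing-set argument with the boundary pinned down''. Proposition~\ref{prop:decoupage} concerns fibered \emph{tori}, not annuli, so it cannot be invoked directly; and the boundary condition removes the deck-group ambiguity but does nothing about births and deaths of dividing curves or about boundary-parallel arcs. The paper's proof runs through Colin's discretization into pinched products, then shows the dividing sets $\Gamma$ and $\Gamma'$ both consist of $2d$ traversing arcs using the twisting-number bound $t(L)\le -d$ together with the realization lemma (Proposition~\ref{prop:divided_foliations}), and finally needs two separate rigidity arguments: for $d>1$, smoothing four nested annuli into tori bounding a thickened torus and applying Pick's formula plus the classification of tight contact structures on thickened tori to produce a Legendrian fiber with twisting $>-d$, a contradiction; for $d=1$, embedding $(V,\xi)$ into the contact element bundle of a larger closed surface, extending $A$ and $A'$ to non-separating tori, and only then applying Proposition~\ref{prop:decoupage}. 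None of these ingredients appears in your sketch, and without them the step ``any $\xi_d$-convex annulus pinched-cobordant to $A$ is divided by a curve isotopic to $j_1(\Gamma)$'' --- which is what the solid-torus classification actually requires as input --- is unjustified.
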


The following proof will need one more technical ingredient from the study of
contact structures on circle bundles: the twisting number.
Fibers in $V$ have a canonical framing coming from vector fields along
the fiber which project to some constant vector on the base. If $L$ is any
Legendrian curve isotopic to a fiber, we call twisting number of $L$ the number
$t(L)$ of turns made by $\xi_d$ along $L$ compared to the canonical framing
transported by isotopy from some fiber, see \cite[Page 227]{Giroux_cercles} for
further discussion.  As explained in \cite[Lemma 3.6]{Giroux_cercles}, it
follows from Bennequin's inequality in $\RR^3$ that $t(L) \leq -d$ for all $L$.

\begin{proof}
We first prove connectedness of $\Po(A, V_d; \xi_d)$. Let $(j_t)_{t \in [0, 1]}$ be an
isotopy of embeddings of $A$ which coincides with the inclusion map on a
neighborhood of $\partial A$. Let $\Gamma$ be a dividing set on $A$ associated
to some homogeneous neighborhood.
According to Proposition~\ref{prop:divided_foliations}.c, we only need to prove
that $j$ is homotopic to a path in $\Po(A, V_d; \Gamma)$. 

We use Colin's discretization technique \cite{Colin_spheres} which relies on
the following observation.  We can find times $t_0 = 0 < t_1 < \cdots < t_k =
1$ such that for $t$ in $[t_i, t_{i + 1}]$ the annuli $j_t(A)$ are all
contained in some pinched product: $A_i \times [0, 1]$ with $\{x\} \times [0,
1]$ collapsed to a point for $x$ in a neighborhood of $\partial A_i$. The
sub-path $j_t$, $t \in [t_i, t_{i + 1}]$ is then homotopic with fixed end
points to the concatenation of two paths whose common extremity has image $A_i
\times \{0\}$. In addition, genericity of $\xi_d$-convex surfaces allows us to
assume $A_i \times \{0\}$ is $\xi_d$-convex.  So we have replaced $j$ by the
concatenation of $2k$ paths of embeddings sweeping out pinched products. We can
assume there is only one such path and the general case follows by induction.

The classification of tight contact structures on solid tori \cites{Giroux_bif,
Honda_I} guaranties that, in this situation, $j$ is homotopic to an isotopy in
$\Po(A, V_d; \Gamma)$ as soon as $A'=j_1(A)$ is divided by a curve $\Gamma'$ isotopic
to $j_1(\Gamma)$. So we prove that fact.

We first remark that both $\Gamma$ and $\Gamma'$ are made of $2d$ traversing
curves because otherwise we could use the realization lemma
(Proposition~\ref{prop:divided_foliations}.b) to produce a Legendrian circle
$L$ isotopic to the fibers with twisting number $t(L) > -d$. We now need two
separate arguments depending on the value of $d$.

Suppose first that $d$ is greater than one. Let $A''$ and $A'''$ be annuli
isotopic to $A'$ through $\xi_d$-convex surfaces and such that the annuli, $A$,
$A'$, $A''$, $A'''$ pairwise bound pinched products and there is an arc going
from $A$ to $A'''$ in the pinched product they bound and meeting $A'$ and then
$A''$ in its interior. Near their boundary all these annuli are fibered over
arcs in $S$.  Let $T \times [0, 1]$ be a thickened torus with $\xi_d$-convex
boundary $T_0 \sqcup T_1$ such that $T_0$ is a smoothing of $A \cup A'''$,
$T_1$ is a smoothing of $A' \cup A''$ and those tori are fibered in the
smoothing region.  We identify the first homology groups of $T_0$ and $T_1$
using the product $T \times [0, 1]$ and fix an integer basis $(S, F)$ where $F$
is the class coming from fibers of $V$.

If $\Gamma'$ is not isotopic to $j_1(\Gamma)$ then, after orienting all dividing
curves of $T_0$ and $T_1$ in the same way, their total homology class is 
$2(d, x_0)$ on $T_0$ and $2(d, x_1)$ on $T_1$ with $x_1 \neq x_0$. Pick's
formula ensures that the triangle with vertices $(0,0)$, $(d,x_0)$ and 
$(d, x_1)$ in $H_1(T; \RR)$ contains integer points outside its vertical edge.
If $(a, b)$ is such a point then $a < d$. The classification of tight contact
structures on thickened tori then gives a $\xi_d$-convex torus in 
$T \times [0, 1]$ divided by a collection of parallel curves which can be
oriented all in the same way to have total homology class $2(a, b)$. The
realization lemma gives again a Legendrian curve with twisting number 
$t = -a > -d$ hence a contradiction.

We now handle the case $d = 1$. In particular $(V_d, \xi_d)$ is isomorphic to
$(V, \xi)$. Adding 1-handles, one can easily embed $S$ into
a surface $S'$ with connected boundary so that $\xi$ extends to a contact
structures tangent the fibers of $S' \times \SS^1$ and, of course, the
projection of $A$ stays non-separating in $S'$. So we assume $S$ has connected
boundary. Let $S''$ be the surface obtained by gluing a disk along the
boundary of $S$.
The description of the contact structure $\xi$ in terms of a 1-form 
$\lambda$ on $S$ allows to understand the characteristic foliation 
$\xi \partial V$ in term of the index of $\lambda$ along $\partial S$. The later
is given by the Poincar\'e-Hopf theorem so it is fixed. Using this information
we can embed $(V, \xi)$ into the space $V''$ of contact elements of $S''$,
with its canonical contact structure. 
In $V''$ one can extend $A$ and $A'$ to isotopic non-separating tori which
coincide outside $V$. Both tori are divided by two curves and Proposition
\ref{prop:decoupage} guaranties those curves are isotopic. This implies that
$\Gamma'$ is isotopic to $j_1(\Gamma)$.

We now turn to the case of a torus $T$ fibered over a non-separating circle $C$
in $S$. We fix a dividing set $\Gamma$ of $\xi_d T$.
Let $j$ be any isotopy of embeddings of $T$ in $V_d$ such that $T' = j_1(T)$ is
$\xi_d$-convex. Proposition \ref{prop:decoupage} ensures that any component of any
dividing set of $\xi_d T'$ is isotopic in $T'$ to a component of $j_1(\Gamma)$.
However there always exist isotopies of $T$ which change the number of dividing
curves and, as explained in Section~\ref{sec:disconnected}, there is no general
result allowing to get rid of them. This is where we need $C$ to be
non-separating and not only homotopically non-trivial. The idea, which was born
in \cite[Proposition 5.4]{Paolo_thesis} and further developed in \cite[Lemma
8.5]{Patrick_gcs} is to consider a fibered torus $F$ intersecting $T$ along one
fiber. Then for any isotopy $\varphi_t$, one can discretize the movement of $F$
while constructing an isotopy of $T$ through $\xi_d$-convex surfaces. There is no
boundary in \cite{Patrick_gcs} but one can check that it does not change
anything here. This trick constructs a contact isotopy $\varphi'_t$ such that
$\varphi'_1(T) = \varphi_1(T)$. Of course parametrizations don't match in
general: $(\varphi'_1)^{-1} \circ \varphi_1$ induces a self-diffeomorphism of
$T$ which may fail to be isotopic to the identity among diffeomorphisms
preserving $\xi_d T$. However, after composing $\varphi'_1$ by a deck
transformation, we can assume that each circle of singularities of $T$ is
globally preserved and after an ultimate contact isotopy, we get a path in
$\Po(T, V_d; \xi_d)$.  So the deck transformations group acts transitively on
$\pi_0(\Po(T, V_d; \xi_d))$. The last part of Lemma~\ref{lemma:embeddings_torus}
states that this action is also free. 
\end{proof}

\begin{theorem} \label{thm:diffeos_tangents}
If $S$ is closed then the kernel of the canonical homomorphism 
$\MCG(V_d,\xi_d) \to \MCG(V_d)$ is the cyclic group of deck transformations of
$V_d$ over $V$. 
If $V$ has non-empty boundary then 
$\MCG(V_d, \partial V; \xi_d) \to \MCG(V_d, \partial V_d)$ is injective.
\end{theorem}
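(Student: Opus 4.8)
The plan is to analyze the kernel of $\MCG(V_d,\xi_d) \to \MCG(V_d)$ using the fibration machinery from Section~\ref{sec:fibrations} together with the surface-embedding results of this section. An element of the kernel is a contact transformation $\phi$ that is smoothly isotopic to the identity; the goal is to show that $\phi$ is contact-isotopic to a deck transformation of $V_d \to V$, and that these deck transformations generate a cyclic group of order exactly $d$ in $\MCG(V_d,\xi_d)$. First I would choose a fibered torus $T = \pi^{-1}(C)$ over a non-separating embedded circle $C$ in $S$ (such a $C$ exists since $g \ge 1$), and make $T$ $\xi_d$-convex. Since $\phi$ is smoothly isotopic to the identity, $\phi|_T$ lies in $\Po(T,V_d)$; after an isotopy I would arrange $\phi(T)$ to be $\xi_d$-convex, so that $\phi|_T$ represents a class in $\pi_0\bigl(\Po(T,V_d;\xi_d)\bigr)$.

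Next I would invoke Proposition~\ref{prop:tores_Paolo_Patrick}: the deck transformation group acts freely and transitively on $\pi_0\bigl(\Po(T,V_d;\xi_d)\bigr)$. Hence there is a unique deck transformation $\delta$ such that $\delta^{-1}\circ\phi$ restricts on $T$ to an embedding contact-isotopic (within $\Po(T,V_d;\xi_d)$) to the inclusion. Replacing $\phi$ by $\delta^{-1}\circ\phi$ and using the path-lifting property of the fibration $\D(V_d;\xi_d) \to \P(T,V_d;\xi_d)$ from Lemma~\ref{lemma:fibration_PFxi}, I would reduce to the case where $\phi$ is a contact transformation fixing $T$ pointwise (or at least fixing it setwise and acting trivially there up to contact isotopy). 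At this stage one cuts $V_d$ along $T$: the complement is a circle bundle over the cut surface $S \setminus C$, which has non-empty boundary, and $\phi$ induces a contact transformation there relative to the boundary. This is precisely where the second, relative, statement of the theorem becomes the inductive workhorse.

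For the relative statement—injectivity of $\MCG(V_d,\partial V;\xi_d) \to \MCG(V_d,\partial V_d)$ when $S$ has boundary—I would run the same cutting argument but now using fibered annuli over non-separating arcs. Here the key input is the connectedness of $\Po(A,V_d;\xi_d)$ from Proposition~\ref{prop:tores_Paolo_Patrick}: a contact transformation relative to the boundary that is smoothly isotopic to the identity restricts on such an annulus $A$ to a path component of $\Po(A,V_d;\xi_d)$, which is trivial, so $\phi|_A$ is contact-isotopic to the inclusion. Cutting along $A$ reduces the genus (or the number of handles) and allows an induction on the complexity of $S$, with the base case being a surface simple enough that $\D(V_d,\partial V;\xi_d)$ is connected. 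Because there is no room for nontrivial deck transformations relative to the boundary, the induction yields injectivity rather than a $\Z/d\Z$ kernel.

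The main obstacle I anticipate is organizing the cut-and-induct scheme so that the pieces glue back correctly at the level of $\pi_0$, in particular controlling the behavior of the contact isotopies near the cutting surfaces so that they respect the relative-to-boundary condition (cf.\ Remark~\ref{rem:rel_neighb}, where working relative to a \emph{neighborhood} of the boundary is essential for the fibration properties to hold). A second delicate point is verifying that the cyclic group of deck transformations injects into $\MCG(V_d,\xi_d)$ with order exactly $d$: this requires the lower-bound input, namely the last part of Lemma~\ref{lemma:embeddings_torus} asserting that $R_{k/d}$ acts nontrivially on $\pi_0\bigl(\Po(T,V_d;\xi_d)\bigr)$ for $1 \le k \le d-1$, which rules out any smaller cyclic quotient and ensures the deck transformations are not contact-isotopic to the identity.
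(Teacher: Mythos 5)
Your proposal follows essentially the same route as the paper's proof: cutting along a fibered torus over a non-separating circle and using the free and transitive deck-group action on $\pi_0\bigl(\Po(T,V_d;\xi_d)\bigr)$ (Proposition~\ref{prop:tores_Paolo_Patrick}) to reduce the closed case to the relative one, then an induction over the complexity of $S$ via fibered annuli over non-separating arcs using connectedness of $\Po(A,V_d;\xi_d)$, with Lemma~\ref{lemma:embeddings_torus} supplying the lower bound for the $\Z/d\Z$ kernel. The only point you leave vague is the base case of the induction, which in the paper is the disk (solid torus) and requires genuine extra input---deforming $\xi_d$ near $\partial V_d$ via the realization lemma and Proposition~\ref{prop:homotopy_depends_Gamma} so a meridian disk with Legendrian boundary exists, then Colin's disk-embedding theorem and Eliashberg's triviality of $\MCG(B^3,\partial B^3;\xi)$---but the overall architecture matches.
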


\begin{proof}
We first assume $V$ has non-empty boundary and prove that the map
$\MCG(V_d, \partial V_d; \xi_d) \to \MCG(V_d, \partial V_d)$ is
trivial. The proof proceeds by induction on 
\[
n(S) = -2\chi(S) -\beta(S) = \beta(S) + 4g(S) -4
\]
where $\chi(S)$ and $g(S)$ are the Euler characteristic and genus of $S$ and
$\beta(S)$ is the number of connected components of $\partial S$.  So 
$n(S) \geq -3$ with equality when $S$ is a disk. 

We first explain the induction step so we assume $n(S) > -3$. Let $\varphi$ be a
contactomorphism of $V_d$ relative to some neighborhood $U$ of $\partial V_d$
and smoothly isotopic to the identity relative to $U$. Let $a$ be a properly
embedded non-separating arc in $S$ and
denote by $A$ the annulus fibered over $a$ and $i \co A \to V_d$ the inclusion
map. According to Proposition~\ref{prop:tores_Paolo_Patrick}, 
$\Po(A, V_d,\xi_d)$ is connected hence the path lifting property of the fibration
$\Do(V_d, \partial V_d; \xi_d) \to \Po(A, V_d; \xi_d)$ from
Lemma~\ref{lemma:fibration_PFxi} implies that $\varphi$ is contact isotopic to 
some $\varphi'$ which is relative to $A$ and $U$. Using
Remark~\ref{rem:rel_neighb}, we can assume $\varphi'$ is relative to a
neighborhood of $\partial V_d \cup A$ which is fibered over some neighborhood $W$
of $a \cup \partial S$ in $S$. We cut $S$ along $a$ and round the corners
inside $W$ to get a subsurface $S' \subset S$ with $n(S') < n(S)$. By induction
hypothesis applied to $\pi^{-1}(S')$, $\varphi'$ is contact isotopic to
identity so the induction step is completed.

The induction starts with the disk case which is already explained with all
details in \cite[Page 345]{Giroux_transfo}. The idea is the same as for the
induction step but the cutting surface in the solid torus $V_d$ is a meridian
disk. There are no such disk with Legendrian boundary in $V_d$ but one can use
the realization lemma (Proposition \ref{prop:divided_foliations}) to deform
$\xi_d$ near $\partial V_d$ until such a disk exists. This does not change the
homotopy type of $\D(V_d, \partial V_d; \xi_d)$ according to Proposition
\ref{prop:homotopy_depends_Gamma}. A variation on Colin's result about
embedding of disks in \cite[Theorem 3.1]{Colin_recollement} then replaces
Proposition~\ref{prop:tores_Paolo_Patrick} and the final isotopy is provided by
Eliashberg's result in \cite{Eliashberg_20ans} that 
$\MCG(B^3, \partial B^3; \xi)$ is trivial for the standard ball.

We now turn to the case where $V_d$ is closed. We first prove that the group of
deck transformations injects into $\MCG(V_d; \xi_d)$. Let $C$ be a non-separating
circle in $S$ and $T$ the fibered torus over $C$. Denote by $i$ the inclusion of
$T$ in $V_d$. Proposition~\ref{prop:tores_Paolo_Patrick} guaranties that the
action of a non-trivial deck transformation $f$ on $\pi_0(\Po(T, V_d; \xi_d))$
is non-trivial hence $f$ is non-trivial in $\MCG(V_d; \xi_d)$.

We now prove surjectivity. Let $\varphi$ be a contactomorphism of $V_d$ which is
smoothly isotopic to the identity. Proposition~\ref{prop:tores_Paolo_Patrick}
gives a deck transformation $f$ such that $f \circ \varphi \circ i$ is isotopic
to $i$ in $\Po(T, V_d; \xi_d)$. As above, this implies that $f \circ \varphi$ is
contact isotopic to a contactomorphism $\varphi'$ which is relative to an open
fibered neighborhood $U$ of $T$. The circle bundle $V_d \setminus U$ has
non-empty boundary hence we know that $\varphi'$ is contact isotopic to
identity.
\end{proof}

\begin{corollary}
\label{cor:pi1}
Assume that $V_d$ has empty boundary and denote by $\CS$ the space
$\D(V_d)\bdot\xi_d$ of contact structures isomorphic to $\xi_d$ on $V_d$.
Let $R_t$ denote the action of $e^{2i\pi t} \in \SS^1$ on $V_d$. 
The fundamental group $\pi_1(\CS, \xi_d)$ is an infinite
cyclic group generated by the loop $t \mapsto (R_{t/d})_*\xi_d$.
\end{corollary}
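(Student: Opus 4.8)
The plan is to feed Theorem~\ref{thm:diffeos_tangents} into the long exact homotopy sequence of the Gray fibration of Lemma~\ref{lemma:Gray}, applied to the closed manifold $(V_d,\xi_d)$:
\[
\pi_1\bigl(\D(V_d)\bigr)\xrightarrow{\ q_*\ }\pi_1(\CS,\xi_d)
\xrightarrow{\ \partial\ }\pi_0\bigl(\D(V_d;\xi_d)\bigr)
\xrightarrow{\ \iota_*\ }\pi_0\bigl(\D(V_d)\bigr),
\]
where $q\from\D(V_d)\to\CS$ is $\phi\mapsto\phi_*\xi_d$. By Theorem~\ref{thm:diffeos_tangents} the kernel of $\iota_*$ is the deck group $\Z/d\Z$, so exactness gives that $\partial$ surjects onto $\Z/d\Z$ and that $\ker\partial=\im q_*$. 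Writing $\alpha(t)=(R_{t/d})_*\xi_d$ and $\beta(t)=(R_t)_*\xi_d$ for the two loops at $\xi_d$, the corollary will follow from three facts: that $\partial[\alpha]$ generates $\Z/d\Z$; that $[\beta]=[\alpha]^d$; and that $\im q_*=\langle[\beta]\rangle$ is infinite cyclic. Together these yield $\pi_1(\CS,\xi_d)=\langle[\alpha]\rangle$ with $[\alpha]$ of infinite order.

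The first two facts are formal. A lift of $\alpha$ through $q$ starting at the identity is $t\mapsto R_{t/d}$, ending at the deck transformation $R_{1/d}$; hence $\partial[\alpha]=[R_{1/d}]$, a generator of $\Z/d\Z$ by Theorem~\ref{thm:diffeos_tangents}. For $[\beta]=[\alpha]^d$, I would split $\beta$ at the parameters $k/d$, $0\le k\le d$, where it returns to $\xi_d$ because $R_{k/d}$ is a contactomorphism; the $k$-th piece is $(R_{k/d})_*\circ\alpha$, so $[\beta]=\prod_{k=0}^{d-1}(R_{k/d})_*[\alpha]$. Now $(R_{k/d})_*$ is freely homotopic to the identity of $\CS$ through $(R_{sk/d})_*$, $s\in[0,1]$, with basepoint trace the initial arc of $\beta$ reaching $k/d$; hence $(R_{k/d})_*$ acts on $\pi_1(\CS,\xi_d)$ by conjugation by the class of that arc. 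A short induction identifies this class with $[\alpha]^k$, so each conjugation fixes $[\alpha]$, every factor equals $[\alpha]$, and $[\beta]=[\alpha]^d$ follows with no commutativity assumption.

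The substantive point is that $\im q_*$ is infinite cyclic, and here I would use Lemma~\ref{lemma:embeddings_torus}. The homotopy type of $\D(V_d)$ for a circle bundle over a closed surface of genus $g\ge2$ is known: its identity component retracts onto the fibre‑rotation circle, so $\pi_1(\D(V_d))\cong\Z$ is generated by $\rho\from t\mapsto R_t$ and $\im q_*=\langle q_*\rho\rangle=\langle[\beta]\rangle$. It remains to see $[\beta]$ has infinite order. Suppose $[\beta]^m=1$ with $m\ne0$. Then $q_*(m\rho)=0$, so by exactness the genuine loop $\rho_m\from t\mapsto R_{mt}$ in $\D(V_d)$ is homotopic rel basepoint to a loop $\mu$ valued in the fibre $\D(V_d;\xi_d)$. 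Fix a fibred torus $T$ over a homotopically non‑trivial circle and restrict everything via $\phi\mapsto\phi\rst T$, which carries $\D(V_d)$ into $\Po(T,V_d)$ and $\D(V_d;\xi_d)$ into $\Po(T,V_d;\xi_d)$. Then $\rho_m$ restricts to $t\mapsto R_{mt}\rst T=\gamma_{md}(t)$, and the restricted homotopy exhibits $\gamma_{md}$ as homotopic rel basepoint to a loop lying in $\Po(T,V_d;\xi_d)$; such a loop is trivial in $\pi_1(\Po(T,V_d),\Po(T,V_d;\xi_d))$. This contradicts Lemma~\ref{lemma:embeddings_torus}, which asserts $\gamma_{md}$ is non‑trivial there for $md\ne0$. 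Hence $[\beta]$, and therefore $[\alpha]$, has infinite order.

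The main obstacle is this last step: one must realise the restriction $\phi\mapsto\phi\rst T$ as a morphism from the Gray fibration to the embedding fibration of Lemma~\ref{lemma:fibration_PFxi} carefully enough that a null‑homotopy of $[\beta]^m$ genuinely trivialises the relative class of $\gamma_{md}$, so that Lemma~\ref{lemma:embeddings_torus} can be invoked. A secondary point, needed to pin down $\im q_*$ rather than merely bound it, is the external input $\pi_1(\D(V_d))\cong\Z$ from the Seifert‑fibred case of the generalised Smale conjecture. Once that and the infinite order of $[\beta]$ are in hand, the extension $0\to\langle[\alpha]^d\rangle\to\pi_1(\CS,\xi_d)\to\Z/d\Z\to0$, in which $[\alpha]$ maps to a generator of the quotient while $[\alpha]^d$ generates the infinite cyclic kernel, forces $\pi_1(\CS,\xi_d)\cong\Z$ generated by $[\alpha]$.
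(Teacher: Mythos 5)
Your proof is correct and takes essentially the same route as the paper's: the long exact sequence of the Gray fibration of Lemma~\ref{lemma:Gray}, the Laudenbach--Hatcher input $\pi_1(\D(V_d))\cong\ZZ$ generated by fiber rotation, Lemma~\ref{lemma:embeddings_torus} (applied via restriction to a fibered torus) to show this $\ZZ$ injects into $\pi_1(\CS,\xi_d)$, and Theorem~\ref{thm:diffeos_tangents} for the $\ZZ/d\ZZ$ kernel. You merely spell out the final assembly (that $\partial[\alpha]$ generates $\ZZ/d\ZZ$, that $[\beta]=[\alpha]^d$ with the basepoint-conjugation bookkeeping, and the resulting extension argument), which the paper leaves implicit.
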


\begin{proof}
The fibration $\D(V_d) \to \CS$ of Lemma~\ref{lemma:Gray} gives the exact
sequence
\[
\pi_1(\D(V_d), \Id) \to \pi_1(\CS, \xi_d) \to \pi_0(\D(V_d; \xi_d)) \to
\pi_0(\D(V_d)).
\]
We know from \cites{Laudenbach_dim3, Hatcher_large} that $\pi_1(\D(V_d), \Id)$ is an
infinite cyclic group generated by the loop $t \mapsto R_t$, $t \in [0, 1]$.
Lemma~\ref{lemma:embeddings_torus} implies that this group injects into
$\pi_0(\Do(V_d), \Do(V_d; \xi_d)) \simeq \pi_1(\CS, \xi_d)$.
The result then follows from Theorem~\ref{thm:diffeos_tangents} describing 
the kernel of $\pi_0(\D(V_d; \xi_d)) \to \pi_0(\D(V_d))$.
\end{proof}

Before coming back to the special case of $V = V_1$ we note one more general
corollary of Proposition~\ref{prop:decoupage}.

\begin{lemma}
\label{lemma:over_identity}
A diffeomorphism of $V_d$ which is fibered over the identity is isotopic to a
contactomorphism only if it is isotopic to the identity.
\end{lemma}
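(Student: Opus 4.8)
The plan is to detect the isotopy class of $\phi$ through its action on the homology of fibered tori, and to pin that action down using the hypothesis that $\phi$ is smoothly isotopic to a contactomorphism.

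First I would reduce to a gauge transformation. Since $\phi$ covers the identity and, being isotopic to a contactomorphism (which preserves the orientation induced by the cooriented contact structure), preserves orientation, it preserves each fiber together with its orientation. Hence, up to an isotopy through fiber-preserving diffeomorphisms, $\phi$ may be taken to be a gauge transformation, given by a map $g \from S \to \SS^1$ acting by rotation on the fibers. Its class through such diffeomorphisms is the homotopy class $[g] \in [S,\SS^1] = H^1(S;\ZZ)$, and $\phi$ is isotopic to the identity in $\D(V_d)$ as soon as $[g] = 0$: then $g$ is homotopic to a constant, so $\phi$ is isotopic to a constant rotation $R_s$, itself isotopic to the identity via $R_{ts}$. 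Thus it suffices to prove $[g] = 0$.

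Next, fix a non-separating embedded circle $C \subset S$ and let $T = \pi^{-1}(C)$ be the fibered torus, with fiber class $F$ and a section class $\sigma$ forming a basis of $H_1(T)$. Because $\phi$ covers the identity and preserves fibers, $\phi(T) = T$ and $(\phi|_T)_*$ is the shear $\sigma \mapsto \sigma + kF$, $F \mapsto F$, where $k = \langle [g],[C]\rangle$ is the degree of $g$ along $C$. Now let $\psi$ be a contactomorphism smoothly isotopic to $\phi$, and fix a dividing set $\Gamma$ of $\xi_d T$. On one hand, $\psi$ carries a homogeneous neighborhood of $T$ to one of $\psi(T)$, so $\psi(T)$ is $\xi_d$-convex with dividing set $\psi(\Gamma)$. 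On the other hand, writing $\Psi_t$ for an ambient isotopy with $\Psi_0 = \Id$ and $\Psi_1 = \psi\circ\phi^{-1}$, one has $\Psi_1(T) = \psi(T)$, so Proposition~\ref{prop:decoupage} applies and says that $\xi_d\psi(T)$ is divided by curves isotopic to the components of $\Psi_1(\Gamma) = \psi(\phi^{-1}(\Gamma))$. Comparing the two descriptions of the dividing set of $\xi_d\psi(T)$ and pulling back by $\psi$, I would conclude that $\Gamma$ and $\phi|_T(\Gamma)$ are isotopic multicurves in $T$.

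The homological computation then closes the argument. With all components consistently oriented, $[\Gamma] = 2(d,x-1)$ in the basis $(\sigma,F)$, so the isotopy $\Gamma \simeq \phi|_T(\Gamma)$ forces $(\phi|_T)_*[\Gamma] = \pm[\Gamma]$ (the sign being unavoidable as the dividing set is unoriented). Applying the shear gives $(\phi|_T)_*[\Gamma] = (2d,\, 2dk + 2(x-1))$; the sign $-$ is impossible since $d \geq 1$, and the sign $+$ yields $2dk = 0$, hence $k = 0$. As $C$ ranges over non-separating circles, the classes $[C]$ generate $H_1(S;\ZZ)$, so $[g]$ vanishes on a generating set and therefore $[g]=0$, which is what we wanted. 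I expect the main obstacle to be the middle step: arranging the ambient isotopy $\Psi_t$ so that Proposition~\ref{prop:decoupage} genuinely applies, and then correctly matching its output against the dividing set $\psi(\Gamma)$ coming from $\psi$ being a contactomorphism, in order to extract the clean identity $(\phi|_T)_*[\Gamma] = \pm[\Gamma]$. The remaining homological bookkeeping and the fact that non-separating circles generate $H_1(S;\ZZ)$ are routine.
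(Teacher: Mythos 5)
Your proof is correct and follows essentially the same route as the paper: both hinge on applying Proposition~\ref{prop:decoupage} to fibered tori over homotopically essential circles to show that the diffeomorphism preserves the isotopy class of the dividing curves, which are essential and not fibers, thereby killing the fiberwise shear; your gauge reduction to $[g] \in H^1(S;\ZZ)$ and the homology computation merely make explicit the criterion that the paper invokes without proof. The one small caveat is your restriction to non-separating circles, whose classes generate $H_1(S;\ZZ)$ only when $S$ has positive genus; replacing them by arbitrary homotopically essential circles (as in the paper's proof, which is all Proposition~\ref{prop:decoupage} requires) covers the planar-with-boundary cases as well.
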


\begin{proof}
Let $f$ be a diffeomorphism of $V_d$ fibered over the identity of $S$. In order to
guaranty that $f$ is isotopic to the identity, it is enough to check that,
for every torus $T$ fibered over a homotopically essential circle, the
restriction of $f$ to $T$ preserves an isotopy class of curves which is
different from the class of fibers. 
Assume that $f$ is isotopic to a contactomorphism. This condition means that
$\xi_d' = f^*\xi_d$ is isotopic to $\xi_d$. 
Since $f$ is fibered, the contact structure $\xi_d'$ is
also tangent to fibers. Proposition~\ref{prop:decoupage} then implies that, for
each torus $T$ as above, $f$ preserves the isotopy class of dividing curves.
Those dividing curves are homotopically essential and not isotopic to fibers
hence $f$ is isotopic to the identity.
\end{proof}

\begin{corollary}
\label{cor:elements_contact}	
The lifting map from $\MCG(S)$ to $\MCG(V, \xi)$ is an isomorphism.
\end{corollary}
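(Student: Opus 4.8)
The plan is to combine the injectivity of the natural map $\MCG(V;\xi) \to \MCG(V)$—the closed case of Theorem~\ref{thm:diffeos_tangents} with $d=1$, where $V_1 = V$ has no nontrivial deck transformations—with Lemma~\ref{lemma:over_identity} and the known structure of $\MCG(V)$ coming from Waldhausen~\cite{Waldhausen_klasse}. Throughout I write $\hat\phi \in \D(V;\xi)$ for the canonical coorientation-preserving contactomorphism lift of $\phi \in \D(S)$, so that the lifting map is $[\phi] \mapsto [\hat\phi]$; note that $\hat\phi$ is fiber-preserving and covers $\phi$.

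For injectivity I would first record that, since $S$ is closed of genus at least $2$ and $V = T_1^*S$ is a circle bundle of nonzero Euler number, the Seifert fibration of $V$ is unique up to isotopy by Waldhausen. Hence every diffeomorphism of $V$ is isotopic to a fiber-preserving one and descends to a well-defined mapping class of $S$, yielding a projection homomorphism $p \co \MCG(V) \to \MCG(S)$. Because $\hat\phi$ covers $\phi$, the composite $\MCG(S) \to \MCG(V;\xi) \to \MCG(V) \xrightarrow{p} \MCG(S)$ is the identity, so the lifting map admits a left inverse and is injective. This half needs only Waldhausen's uniqueness of the fibration.

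The substance is surjectivity. Let $\psi$ be a contactomorphism of $(V,\xi)$. By Waldhausen again, $\psi$ is smoothly isotopic to a fiber-preserving diffeomorphism $\psi'$ covering some $\phi \in \D(S)$. The composite $v := \hat\phi^{-1}\circ \psi'$ is then fibered over the identity of $S$. On the other hand $v$ is smoothly isotopic to $\hat\phi^{-1}\circ \psi$, which is an honest contactomorphism; thus $v$ is a diffeomorphism fibered over the identity that is smoothly isotopic to a contactomorphism. Lemma~\ref{lemma:over_identity} then forces $v$ to be isotopic to the identity, so $[\psi] = [\hat\phi]$ in $\MCG(V)$. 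Since $\psi$ and $\hat\phi$ are both contactomorphisms, they define classes in $\MCG(V;\xi)$ with equal image in $\MCG(V)$, and the injectivity of $\MCG(V;\xi) \to \MCG(V)$ from Theorem~\ref{thm:diffeos_tangents} ($d=1$) upgrades this to $[\psi] = [\hat\phi]$ in $\MCG(V;\xi)$. Hence $\psi$ lies in the image of the lifting map.

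The main obstacle is exactly the surjectivity step, and its crux is ruling out a nontrivial \emph{vertical} contribution: a priori a contactomorphism could differ from the lift of its base map by a diffeomorphism fibered over the identity, and such ``vertical twists'' generate a large part of $\MCG(V)$. Lemma~\ref{lemma:over_identity} is precisely the tool that annihilates this possibility, while Theorem~\ref{thm:diffeos_tangents} is what lets me pass from a smooth isotopy to a contact isotopy. The one structural fact I would take for granted is Waldhausen's result that the Seifert fibration is unique up to isotopy, which underlies both the projection $p$ and the reduction of $\psi$ to its fiber-preserving model $\psi'$.
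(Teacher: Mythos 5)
Your proof is correct and follows essentially the same route as the paper: Waldhausen's Satz 10.1 to replace the contactomorphism by a fibered model, Lemma~\ref{lemma:over_identity} to kill the part fibered over the identity, and the injectivity statement of Theorem~\ref{thm:diffeos_tangents} (trivial deck group for $d=1$) to promote the smooth isotopy to a contact one. The only cosmetic difference is in the injectivity half, where you build a left inverse $p\co\MCG(V)\to\MCG(S)$ from uniqueness of the Seifert fibration while the paper simply notes that the composite $\MCG(S)\to\MCG(V)$ is injective by fundamental group considerations; both are standard and equivalent in substance.
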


\begin{proof}
We denote by $p$ the projection from $V$ to $S$ and by $\D(S, \partial S)$
the group of diffeomorphisms of $S$ relative to a neighborhood of $\partial S$.
In the sequence of maps:
\[
\pi_0\D(S, \partial S) \to \pi_0\D(V, \partial V; \xi) \to \pi_0\D(V, \partial V)
\]
the composite map is known to be injective (this follows from considerations of
fundamental groups) so the first map is also injective.
It remains to prove that it is surjective.  Let $\varphi$ be a
contactomorphism. We want to prove that $\varphi$ is contact isotopic to the
lift of some diffeomorphism of $S$.  According to Waldhausen \cite[Satz
10.1]{Waldhausen_klasse}, $\varphi$ is smoothly isotopic to a fibered
diffeomorphism $f$: there exists an isotopy $\psi$ and a diffeomorphism 
$\bar f$ in $\D(S, \partial S)$ such that $f = \psi_1 \circ \varphi$ and 
$p \circ f = \bar f \circ p$.  We will prove that $\varphi$ is contact isotopic 
to the lift $D\bar f$. We first note that $f \circ D(\bar f)^{-1}$ is fibered 
over the identity and is smoothly isotopic to a contactomorphism (through
the path $t \mapsto \psi_{1-t} \circ \varphi \circ D(\bar f)^{-1}$).
Lemma~\ref{lemma:over_identity} then guaranties that $f \circ D(\bar f)^{-1}$
is smoothly isotopic to the identity. Hence $\varphi$ is smoothly isotopic to
$D\bar f$ hence contact isotopic according to
Theorem~\ref{thm:diffeos_tangents}.
\end{proof}

\subsection{The torus case}
\label{ss:torus}

We now explain how the previous discussion can be modified to handle the case
of a torus base. On $\TT^3 = (\RR/\ZZ)^3$ with coordinates $(x, y, z)$, we set 
\[
\xi_d = \ker\big( \cos(2d\pi z)dx - \sin(2d\pi z)dy\big).
\]
The case $d = 1$ corresponds to the contact element bundle of $\TT^2$ while
higher values of $d$ come from self-covering maps unwrapping the fibers.
We denote by $R_t$ the map $(x, y, z) \mapsto (x, y, z + 2\pi t)$.

\begin{theorem}
	\label{thm:t3}
On $\big(\TT^3, \xi_d\big)$:
\begin{itemize}
\item 
a diffeomorphism is isotopic to a contactomorphism if and only its action on
$H_2(\TT^3)$ preserves the homology classes of the prelagrangian torus 
$\{z = 0\}$ up to sign.
\item
The kernel of $\MCG(\TT^3; \xi) \to \MCG(\TT^3)$ is isomorphic to the cyclic
group of order $d - 1$ generated by $(x, y, z) \mapsto (x, y, z + 1/d)$.
\item
The fundamental group $\pi_1(\CS, \xi_d)$ is an infinite cyclic group generated
by the loop $t \mapsto R_{t/n}^*\xi$, $t \in [0, 1]$.
\end{itemize}
\end{theorem}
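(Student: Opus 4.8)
The plan is to deduce all three statements from a single long exact sequence, exactly as in Corollary~\ref{cor:pi1}, but now feeding in the richer structure of $\D(\TT^3)$. The Gray-type fibration $\D(\TT^3) \to \CS$ of Lemma~\ref{lemma:Gray} produces
\[
 \pi_1(\D(\TT^3),\Id) \xrightarrow{\ \alpha\ } \pi_1(\CS,\xi_d)
 \xrightarrow{\ \partial\ } \pi_0\D(\TT^3;\xi_d) \to \pi_0\D(\TT^3)=\SL_3(\ZZ),
\]
in which the third bullet is the computation of $\pi_1(\CS,\xi_d)$, the second is $\ker = \im\partial = \pi_1(\CS,\xi_d)/\im\alpha$, and the first is the image of the last arrow. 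The two new ingredients compared with the higher-genus case are that $\pi_1(\D(\TT^3),\Id)\cong\ZZ^3$ (generated by the three coordinate-translation loops, by Hatcher and Laudenbach) rather than infinite cyclic, and that $\TT^3$ carries a large group of explicit affine contactomorphisms which I will use to realize mapping classes.

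For the first bullet I would argue the two inclusions separately. For necessity, the tori $\{z=c\}$ are pre-Lagrangian for $\xi_d$, and by the rigidity of Proposition~\ref{prop:decoupage} applied to fibered tori a contactomorphism must send this distinguished isotopy class to itself; hence it fixes $[\{z=0\}]\in H_2(\TT^3)$ up to sign, which dually is the statement that its action on $H_1=\ZZ^3$ preserves $\ZZ^2\times\{0\}$ up to sign. For sufficiency I would realize a generating set of the stabilizer of $\ZZ^2\times\{0\}$ by affine contactomorphisms: the $\GL_2(\ZZ)$-action on $(x,y)$ composed with a suitable constant rotation or reflection in $z$ (chosen to rotate the form $\cos(2d\pi z)\,dx-\sin(2d\pi z)\,dy$ back into its kernel while preserving the coorientation), together with the unipotent generators, each written down explicitly and checked to preserve $\xi_d$.

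For the second and third bullets I would run the sequence. The loops $T_x$ and $T_y$ preserve $\xi_d$ pointwise, so they lie in the fiber $\D(\TT^3;\xi_d)$ and map to the constant loop under $\alpha$; only the fiber-rotation loop $R_1$ survives, mapping to $t\mapsto (R_t)_*\xi_d$ in $\CS$. I would identify $\pi_1(\CS,\xi_d)$ as infinite cyclic generated by the elementary loop $t\mapsto (R_{t/d})_*\xi_d$: infinitude comes from the Legendrian non-isotopy of Proposition~\ref{prop:LegT2R}, which prevents the associated classes in $\pi_0(\Po(T,\TT^3;\xi_d))$ from ever collapsing, and the identification of the generator is as in Corollary~\ref{cor:pi1}. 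The kernel is then the cyclic group generated by the image under $\partial$ of this generator, namely the class of the deck transformation $R_{1/d}\co(x,y,z)\mapsto(x,y,z+1/d)$, and its order equals the number of pairwise non-contact-isotopic rotations $R_{k/d}$, which Proposition~\ref{prop:LegT2R} must pin down.

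The main obstacle is precisely this count, and it is where the torus genuinely departs from the higher-genus case of Theorem~\ref{thm:diffeos_tangents}. There one has Proposition~\ref{prop:tores_Paolo_Patrick} giving a free transitive deck action; here Lemma~\ref{lemma:embeddings_torus} was established under the standing assumption that $S$ is not a torus, so I cannot invoke it directly and must extract the count from Proposition~\ref{prop:LegT2R} by hand. The subtlety is that Proposition~\ref{prop:LegT2R} obstructs displacing a Legendrian fiber-circle by an \emph{integer} height, whereas $R_{1/d}$ displaces it by the fractional height $1/d$; deciding exactly which of these fractional rotations are contact-trivial — and hence the precise order of the kernel — is the delicate heart of the argument, and it must be matched against the affine contact isotopies constructed for the first bullet, which are the ones capable of realizing the nontrivial collapses. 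A secondary, more routine obstacle is verifying that the affine models chosen for the $\GL_2(\ZZ)$- and shear-generators are genuinely coorientation-preserving contactomorphisms and that together they generate the full stabilizer.
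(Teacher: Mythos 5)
Your proposal has two genuine gaps, one in each half of the theorem. For the first bullet, the sufficiency argument as described would fail: the affine models you propose are not contactomorphisms of $\xi_d$. A linear map $A \in \mathrm{GL}_2(\ZZ)$ on $(x,y)$ composed with a \emph{constant} rotation or reflection in $z$ preserves $\xi_d$ only for the finite subgroup of $A$'s acting orthogonally, because the coorienting covector $\bigl(\cos(2d\pi z), -\sin(2d\pi z)\bigr)$ turns at constant speed in $z$ while a general $A$ distorts directions non-uniformly, so no constant shift of $z$ compensates; and the affine shear $(x,y,z) \mapsto (x+kz,y,z)$ pulls $\cos(2d\pi z)\,dx - \sin(2d\pi z)\,dy$ back to a form with an uncancellable $k\cos(2d\pi z)\,dz$ term. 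These classes \emph{are} realized by maps isotopic to contactomorphisms, but that is precisely what needs proof. The paper avoids realization altogether: it quotes the classification of tight contact structures on $\TT^3$ --- all incompressible prelagrangian tori are isotopic to $\{z=0\}$, the torsion of $\xi_d$ equals $d-1$, and the isotopy class is determined by the torsion together with the prelagrangian homology class up to sign --- so for any diffeomorphism $\phi$ preserving $\pm[\{z=0\}]$, the structure $\phi_*\xi_d$ is \emph{automatically} isotopic to $\xi_d$, and necessity comes for free from the same classification. (Your route could be patched, e.g.\ by lifting base diffeomorphisms to the contact element bundle with the induced \emph{nonlinear} circle action and using shears $(x,y,z)\mapsto(x+g(z),y+h(z),z)$ with $\cos(2d\pi z)g'(z) = \sin(2d\pi z)h'(z)$ and prescribed periods, but this is not the check you describe. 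Also note that Proposition~\ref{prop:decoupage} is stated under the standing assumption that the base has higher genus; for necessity the paper uses the uniqueness of incompressible prelagrangian tori instead.)

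For the second bullet, your derivation is circular: exactness gives $\ker = \im \partial$, i.e.\ the image of \emph{all} of $\pi_1(\CS,\xi_d)$, and the claim that $\pi_1(\CS,\xi_d)$ is generated by the elementary loop is the third bullet --- which the paper deduces \emph{from} the kernel computation (as in Corollary~\ref{cor:pi1}), not the other way around. The sequence plus Proposition~\ref{prop:LegT2R} only yields the lower bound, namely that the classes $[R_{k/d}]$, $1 \le k \le d-1$, are non-trivial; and here your flagged ``fractional versus integer height'' worry is resolved exactly as in the proof of Lemma~\ref{lemma:embeddings_torus}: pass to the annulus cover determined by the essential circle, pull back by a $d$-fold cover, embed in $\TT^3$ and lift to $\TT^2\times\RR$, which converts the fractional rotation into the integer displacement obstructed by Proposition~\ref{prop:LegT2R}. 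Nothing in that proof uses $\chi(S)<0$, so it applies verbatim to the torus base --- the paper's phrase ``exactly the same proof as in the preceding section'' means precisely this. What is genuinely missing from your proposal is the upper bound: that \emph{every} contactomorphism smoothly isotopic to the identity is contact isotopic to some $R_{k/d}$. This is the geometric heart of Theorem~\ref{thm:diffeos_tangents} --- connectedness of $\Po(A,V_d;\xi_d)$ for fibered annuli and transitivity of the deck action on $\pi_0(\Po(T,V_d;\xi_d))$ from Proposition~\ref{prop:tores_Paolo_Patrick}, Colin's discretization, the classification of tight structures on solid and thickened tori, the induction cutting the base along non-separating arcs, and Eliashberg's theorem for the ball --- and your proposal contains no substitute for rerunning it. A minor point: your count would give order $d$, consistent with the Main Theorem's $\ZZ/d\ZZ$; the ``$d-1$'' in the printed statement is evidently a slip ($d-1$ is the torsion, not the order of the deck group).
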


The first point comes directly from the classification of isotopy classes of
tight contact structures on $\TT^3$ that we now recall. 
The first result, proved in \cite{Giroux_tordue}, is that all incompressible
prelagrangian tori in $(\TT^3, \xi_d)$ are isotopic to $\{z = 0\}$. In
particular they share a common homology class which is well defined up to sign
in $H_2(\TT^3)$.
Next recall that the torsion of a contact manifold $(V, \xi)$ was defined in
\cite[Definition 1.2]{Giroux_bif} to be the supremum of all integers $n \ge 1$
such that there exist a contact embedding of  
\[
\left(T^2 \times [0, 1], \ker\left(\cos(2d\pi z)dx - 
\sin(2d\pi z)dy\right)\right), \qquad (x,y,z) \in T^2 \times [0,1]
\] 
into the interior of $(V, \xi)$ or zero if no such integer $n$ exists.
It follows from \cite[Proposition~3.42]{Giroux_bif} that the torsion of $\xi_d$ 
on $\TT^3$ is $d - 1$ .
The classification of isotopy classes of tight contact structure on $\TT^3$
established in \cite{Giroux_bif} is that any tight contact structure is
isomorphic to some $\xi_d$ and two of them are isotopic if and only if they
have the same torsion and their incompressible prelagrangian tori are
homologous. The first point of the above theorem follows from this
classification and the obvious observation that isomorphic contact structures
have the same torsion.

The description of the kernel in the second point has exactly the same proof as
in the preceding section.

The third point is slightly different because $\pi_1(\D(\TT^3), \Id)$ has rank three.
It is generated by the three obvious circle actions on $(\SS^1)^3$. 
However, two of these circle actions actually belong to $\D(\TT^3; \xi_d)$ so that, in
the exact sequence
\[
\pi_1(\D(\TT^3), \Id) \to \pi_1(\CS, \xi_d) \to \pi_0(\D(\TT^3; \xi_d)) \to
\pi_0(\D(\TT^3)).
\]
considered in the proof of Corollary \ref{cor:pi1}, the extra generators 
of $\pi_1(\D(\TT^3, \Id))$ are mapped to trivial elements of $\pi_1(\CS, \xi_d)$
and the end result does not change.

\section{Examples of disconnected spaces of embeddings}
\label{sec:disconnected}

\subsection{Erratum about the reference [Gir01b]}

In the proof of \cite[Proposition 10]{Giroux_transfo}, the assertion following
the words \emph{Grâce au lemme 14} (namely, the claim that \emph{il est possible
de trouver des points $s_0 = 0 < s_1 < \dots < s_k = 1$ tels que, pour $0 \le i 
\le k-1$, les surfaces $F_s$, $s \in [s_i,s_{i+1}]$ soient toutes incluses dans 
un voisinage rétractile $U_i$ de $F_{s_i}$}) is wrong. In fact, the statement of
Proposition 10 turns out to be wrong (Proposition~\ref{prop:isotopiesT3A} below
provides a counter-example). As a consequence, Lemma 19 and the proofs 
of Theorems 1, 3 and 4 are also wrong. Other intermediate results (in particular
Lemma 15 which is sometimes useful as a complement to contact convexity theory)
are presumably correct. 

To be more explicit, Lemma 14 allows us to assume that each surface $F_s$ has a
retractible neighborhood $U_s$. Each neighborhood $U_s$ contains all surfaces
sufficiently close to $F_s$; in other words, there exists a positive number
$\eps_s$ such that $F_t \subset U_s$ as soon as $|s-t| < \eps_s$. The intervals 
$J_s = (s-\eps_s, s+\eps)$ form an open covering of the segment $[0,1]$ so this
covering admits a finite subcovering $J_{s_i}$, $0 \le i \le k$, and the points
$s_i$ can be chosen and indexed so that $s_0 = 0$, $s_k = 1$ and $s_{i-1} < s_i$
for $1 \le i \le k$. Thus, each $U_i = U_{s_i}$ is a retractible neighborhood of
$F_{s_i}$ such that $U_i \supset F_s$ for all $s$ with $|s-s_i| < \eps_{s_i}$.
This does not imply that $U_i \supset F_s$ for all $s \in [s_i,s_{i+1}]$ and, in
general, it is impossible to force $U_i$ to contain $F_{s_{i+1}}$. Here is an
example: assume that $F_s$ is a convex torus for $s \notin \{1/3,2/3\}$ and that
$s = 1/3$ (resp. $s = 2/3$) corresponds to the death (resp. the birth) of a pair
of parallel components in the dividing set; then $F_{1/3}$ cannot appear in any 
retractible neighborhood of any $F_s$ with $s < 1/3$ because of
\cite[Lemma~4.7]{Giroux_bif} (in contrast, $F_{2/3}$ can
appear in a retractible neighborhood of some $F_s$ with $1/3 < s < 2/3$,
see \cite[Lemma~3.31b]{Giroux_bif}).

\subsection{Disconnected spaces of embeddings}

In this section we describe examples of disconnected spaces consisting of
smoothly isotopic embeddings inducing a fixed characteristic foliation. 
Those examples should be compared to the connectedness results which were
crucial in Section~\ref{sec:main} and complement the erratum above.  More
specifically, we construct disconnected spaces of smoothly isotopic $\xi$-convex
embeddings with a fixed dividing set, and the former spaces are deformation
retracts of the latter by Proposition~\ref{prop:divided_foliations}.

\begin{proposition}
In $\SS^3$ equipped with its standard contact structure, let $D$ be an
unknotted immersed disk with a single clasp self intersection and such that the
contact structure is tangent to $D$ along its boundary (here unknotted means
that $D$ has a regular neighborhood which is an unknotted solid torus, see
Figures~\ref{fig:DW} and \ref{fig:immersedOT}). Let $W$ be any unknotted solid
torus which is a regular neighborhood of $D$. Assume that $T = \partial W$ is
$\xi$-convex. The space $\Po(T, \SS^3; \xi)$ is not connected.
\end{proposition}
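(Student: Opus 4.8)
The plan is to pass to the fixed--dividing--set model and then to distinguish two components by a contact invariant of the solid torus bounded by $T$ on the inside. First I fix a dividing set $\Gamma$ of $\xi T$ coming from a homogeneous neighborhood, so that by Proposition~\ref{prop:divided_foliations}.c the inclusion $\Po(T,\SS^3;\xi)\hookrightarrow\Po(T,\SS^3;\Gamma)$ is a homotopy equivalence; it therefore suffices to show that $\Po(T,\SS^3;\Gamma)$ is disconnected. A point $\psi$ of this space is an embedding smoothly isotopic to the inclusion $\iota$ with $\psi(T)$ convex and dividing set $\psi(\Gamma)$. Since $\psi$ is isotopic to $\iota$ and $W$ is unknotted, $\psi(T)$ separates $\SS^3$ into an inside solid torus $N_\psi$ (the image of $W$) and an outside solid torus, and $\xi$ restricts to a tight contact structure on $N_\psi$ with convex boundary and dividing set $\psi(\Gamma)$.

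The invariant $\nu(\psi)$ I would use is the isotopy class, relative to the boundary, of the marked contact solid torus $(N_\psi,\xi|_{N_\psi})$. By the classification of tight contact structures on solid tori with prescribed boundary dividing set \cite{Giroux_bif}, \cite{Honda_I}, this class ranges over a finite set and is encoded by the maximal twisting number $t$ of a Legendrian core of $N_\psi$ (the invariant recalled before Proposition~\ref{prop:tores_Paolo_Patrick}; see \cite{Giroux_cercles}). Along a path $\psi_s$ in $\Po(T,\SS^3;\Gamma)$ the family $(N_{\psi_s},\xi|_{N_{\psi_s}})$ is a continuous family of contact solid tori with convex boundary whose dividing set stays in a fixed isotopy class; by Gray stability on this compact solid torus (the same mechanism as in Proposition~\ref{prop:homotopy_depends_Gamma}) its rel--boundary isotopy class is locally constant. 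Thus $\nu$ is locally constant, and it is enough to exhibit $\psi_0,\psi_1\in\Po(T,\SS^3;\Gamma)$ with $\nu(\psi_0)\neq\nu(\psi_1)$.

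For $\psi_0=\iota$ the inside torus is $N_{\iota}=W$ and contains the immersed overtwisted disk $D$: its boundary $\partial D$ is a Legendrian unknot along which $\xi$ is tangent to $D$, and the single clasp is exactly what makes the framing induced by the immersed $D$ equal the contact framing inside the tight manifold $\SS^3$. Feeding $D$ into Bennequin's inequality via the realization lemma (Proposition~\ref{prop:divided_foliations}.b) pins $\nu(\iota)$ to its extremal value, in the same spirit in which Proposition~\ref{prop:LegT2R} detects the non-triviality of a rotation in Lemma~\ref{lemma:embeddings_torus}. For $\psi_1$ I would use that, $\partial D$ being an unknot, $T$ is a standard torus admitting an explicit smooth self--isotopy (a rotation transverse to $T$, the local analogue of the fiberwise rotation $R_{k/d}$) that preserves $\xi T$ and hence lands in $\Po(T,\SS^3;\Gamma)$, but carries the immersed overtwisted disk to the opposite side; the inside torus $N_{\psi_1}$ then has a strictly smaller maximal core twisting number, so $\nu(\psi_1)\neq\nu(\psi_0)$ and the space is disconnected. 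The hard part is precisely this computation: establishing rigorously, for an immersed \emph{clasped} disk, that the overtwisted disk forces the extremal twisting number, constructing the competing embedding $\psi_1$ with \emph{identical} characteristic foliation, and certifying through Bennequin's inequality that no contact isotopy can undo the change — this last non-triviality being the exact analogue here of the Legendrian non-isotopy statement of Proposition~\ref{prop:LegT2R}.
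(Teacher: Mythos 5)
Your skeleton (reduce to $\Po(T,\SS^3;\Gamma)$ via Proposition~\ref{prop:divided_foliations}.c, find a locally constant invariant of the complementary solid torus, exhibit two embeddings with different values) matches the paper's proof in outline, but the two steps you yourself flag as ``the hard part'' are the entire content of the proof, and the mechanisms you propose for them do not work. First, the invariant: the maximal twisting number of a Legendrian core is not what separates the components, and you give no reason it should. Both candidate solid tori are unknotted with the same boundary dividing set, and in the Giroux--Honda classification tight structures on such a torus with the same maximal core twisting are distinguished by sign data, which is precisely the universally tight versus virtually overtwisted dichotomy. That dichotomy is the invariant the paper actually uses: the clasped immersed disk $D$ lifts, in the two-fold cover of $W$ unwrapping its core, to an \emph{embedded} overtwisted disk, so $\xi|_W$ is virtually overtwisted, whereas the competing torus is constructed so that its complement is universally tight. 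Your claim that Bennequin's inequality ``pins $\nu(\iota)$ to its extremal value'' via the clasped disk is unsubstantiated --- Bennequin gives upper bounds on invariants of Legendrian knots in tight manifolds, and the immersed disk does not feed into it in the way you suggest. In the paper Bennequin enters at a different point: it proves the Claim that every unknotted $\xi$-convex torus in $\SS^3$ has dividing curves of \emph{positive} direction, by applying the inequality to a Legendrian divide, a $(p,q)$-torus knot with Thurston--Bennequin invariant $-pq$.

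Second, the competing embedding $\psi_1$: your ``rotation transverse to $T$'' faces a dichotomy. If it preserves $\xi$ (like the Hopf rotation), it is realized by a contact isotopy of $\SS^3$, so the resulting embedding lies in the same component of $\Po(T,\SS^3;\xi)$ as the inclusion and detects nothing; if it is merely smooth, you have neither constructed it nor shown it preserves the characteristic foliation, and an isotopy exchanging the two sides of the unknotted torus swaps meridian and longitude, so it does not preserve the dividing class in general. The paper instead produces the second embedding by viewing $\SS^3$ as two transverse unknots together with an interval of pre-Lagrangian tori sweeping out all positive directions, perturbing one of these to a $\xi$-convex torus with $2n$ dividing curves of the prescribed direction, and invoking the positivity Claim above to know that the dividing set of $T$ is realizable in this family; the realization lemma then upgrades this to an embedding $j\in\Po(T,\SS^3;\xi)$, and the complement of $j(T)$ is universally tight while $\xi|_W$ is not. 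Without that construction and the virtual-overtwistedness computation, your argument has no second embedding and no invariant that separates it from the first.
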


\begin{figure}[ht]
	\centering
	\includegraphics{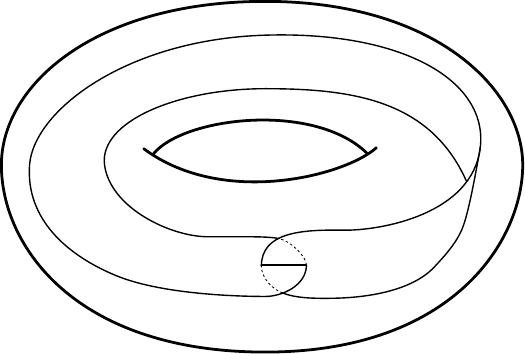}
	\caption{An unknotted immersed disk with a single clasp, sitting inside an
	unknotted solid torus}
	\label{fig:DW}
\end{figure}

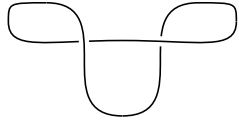
\begin{figure}[ht]
	\centering
\begin{tikzpicture}
  \draw[draw=white, double=black, ultra thick] (0,-1) .. controls +(1,0) and +(-1,0) .. (1,.5) ..
  	controls +(.5,0) and +(0,.25) .. (1.5,.25);
  \draw[draw=white, double=black, ultra thick] (1.5, .25) .. controls +(0,-.4) and +(1,0) .. (0,0)
  	.. controls +(-1,0) and +(0,-.4) .. (-1.5,.25) 
  	.. controls +(0,.25) and +(-.5, 0) .. (-1,.5);
  \draw[draw=white, double=black, ultra thick] (-1,.5)
  	.. controls +(1, 0) and +(-1,0) .. (0 ,-1) ; 
\end{tikzpicture}
	\caption{Lagrangian projection of the boundary of an immersed overtwisted
	disk in the standard contact $\RR^3$}
	\label{fig:immersedOT}
\end{figure}

\begin{proof}
We coorient $T$ so that $W$ is on the negative side of $T$ and we denote
by $W'$ the solid torus which is the closure of $\SS^3 \setminus W$. Since
$\xi$ orients $\SS^3$, we also get an orientation on $T$. This orientation
induces a cyclic ordering on $P(H_1(T; \RR))$. We set 
$d = \ker(H_1(T) \to H_1(W))$ and $d' = \ker(H_1(T) \to H_1(W'))$ where maps
are induced by inclusion. A direction in $H_1(T)$ distinct from $d$ and $d'$
will be called positive if it lies between $d$ and $d'$ and negative otherwise.

If $T_1$ is any cooriented unknotted torus then we can repeat the above
discussion and, for any isotopy sending $T$ to $T_1$ (preserving
orientations), positive directions will get identified with positive
directions because such isotopies map meridian disks to meridian disks in each
solid torus.

\noindent\textbf{Claim:} Any unknotted $\xi$-convex torus in $\SS^3$ is divided
by a collection of essential closed curves whose direction is positive.

\begin{proof}[Proof of claim]
Because $\xi$ is tight, we know from \cite[Théorème 4.5]{Giroux_cercles} that
$T$ is divided by a collection of parallel homotopically essential circles. 
We fix a positive basis $(\mu, \mu')$ of $H_1(T)$ such that $\mu$ is in $d$ and
$\mu'$ is in $d'$ (using the notations above). Let $\Gamma$ be a dividing set
for $T$. For some choice of orientation, the components of $\Gamma$ have
homology class $p\mu + q\mu'$ with $q \ge 0$. The realization lemma (recalled
as part of Proposition~\ref{prop:divided_foliations}) allows to perturb $T$ so
that the characteristic foliation $\xi T$ has a circle of singularities
parallel to $\Gamma$. Such a circle $L$ is a Legendrian $(p, q)$ torus knot
along which $\xi$ does not twist compared to $T$. The Seifert framing of $L$
differs from the framing coming from $T$ by $pq$ so that the Thurston-Bennequin
invariant of $L$ is $-pq$. Since the genus of $L$ is $(|p|-1)(q-1)/2$, the
Bennequin inequality gives $-pq \leq |p|q - |p| - q$. This condition is
equivalent to $p > 0$ and $(q-1)(p-1) \ge 1/4$ and, since $p$ and $q$ are
integer, it is equivalent to $p \ge 1$ and $q \ge 1$.
\end{proof}

We can see $\SS^3$ as the union of two unknotted curves transverse to $\xi$ 
and an open interval of pre-lagrangian tori whose directions sweep out all
positive directions. For each rational positive direction $d$ and each
positive integer $n$, we can perturb the corresponding pre-Lagrangian torus 
to a $\xi$-convex torus $T'$ divided by $2n$ curves with direction $d$. So one
of those $T'$ has the same dividing set as $T$ up to isotopy. After using
once more the realization lemma, we can ensure that $T'$ is the image of $T$
under some embedding $j \in \Po(T, \SS^3; \xi)$. But the complement of $T'$ is
universally tight whereas $\xi_{|W}$ becomes overtwisted in a two-fold cover.
So $j$ is not in the component of the inclusion in $\Po(T, \SS^3; \xi)$.
\end{proof}

\begin{remark}
In order to get the weaker result that some solid torus $W$ satisfies the
conclusion of the above proposition, it is sufficient to observe that the
complement of $D$ contains an unknotted Legendrian knot $L$ entwining $D$ and
define $W$ as the complement of a standard neighborhood of $L$. In that case we
already control the dividing set of $\partial W$ by construction.
Note also that the knot $L$ is not isotopic to the canonical Legendrian unknot
$L_0$ since the complement of the later is universally tight. The
classification of Legendrian unknots in \cite{Eliashberg_Fraser} guarantees
that $L$ is stabilization of $L_0$. So in order to entwine $D$, one needs a
somewhat tortuous Legendrian unknot.
\end{remark}

Next we want to describe examples where we have explicit smooth isotopies among
surfaces which are all $\xi$-convex except for a finite number of times and
exhibit various behaviors for those isotopies. We also want to highlight
situations where persistent intersection phenomena occur and situations where a
contact isotopy exists in the ambiant manifold but not inside a smaller
manifold (where a smooth isotopy still exists). For all this we need the
following technical definition.

\begin{definition}
A discretized isotopy of embeddings of an oriented surface $S$ into a contact
3-manifold $(V, \xi)$ is an isotopy of embeddings $j \co S \times [0,1]
\to V$ such that, for some (unique) integer $n$:
\begin{itemize}
\item 
	the restriction of $j$ to $S \times [i/n, (i + 1)/n]$ is an embedding
	for each $i$ from $0$ to $n-1$,
\item
	all surfaces $j_t(S)$ are $\xi$-convex except when $t = i/n + 1/(2n)$ for some
	integer $i$ between $0$ and $n-1$.
\end{itemize}
Each embedding of $S \times [i/n, (i + 1)/n]$ is called a step of the
discretized isotopy. It is called a forward or backward step depending on
whether it is orientation preserving or reversing.
\end{definition}

Colin's idea described in our proof of Proposition~\ref{prop:tores_Paolo_Patrick}
combines with \cite[Lemme~15]{Giroux_transfo} to prove that any isotopy of
embeddings which starts and end at $\xi$-convex embeddings is homotopic
relative to its end-points to a discretized isotopy.

Any discretized isotopy $j$ defines a sequence of isotopy classes
of multi-curves $\Gamma_0, \dots, \Gamma_n$ such that the characteristic
foliation of $j_{i/n}(S)$ is divided by $j_{i/n}(\Gamma_i)$. 

Our examples below will use facts about $\SS^1$-invariant contact
structures on circle bundles which we now recall. Let $S$ be an oriented
surface with non-empty boundary and $V = S \times \SS^1$ seen as a circle
bundle over $S$. Let $\Gamma$ be a properly embedded multi-curve on $S$ such
that components of the complement of $S$ are labelled by plus or minus so that
adjacent components have different signs. Lutz proved in \cite{Lutz_cercles}
that there is a cooriented $\SS^1$-invariant positive contact structure $\xi$
on $V$ which is tangent to fibers exactly along $\Gamma \times \SS^1$ and
positively (resp. negatively) transverse to fibers over positive (resp.
negative) components of $S \setminus \Gamma$. One says that $\Gamma$ is the
dividing set of $\xi$ (it is indeed a dividing set for each surface 
$S \times \{\theta\}$).
Lutz also proved that two invariant contact structures which agree near
$\partial V$ and have the same dividing set are isotopic relative to
$\partial V$.
Let $\xi$ be such a contact structure. One can check that, for any properly
embedded curve $\gamma$ in $S$ which intersects the dividing set $\Gamma$
transversely (along a non-empty subset), the surface $\gamma \times \SS^1$ is
$\xi$-convex and divided by $(\gamma \cap \Gamma) \times \SS^1$.
Proposition~\ref{prop:decoupage} and Bennequin's theorem can be used to prove
that $\xi$ is tight if and only if $\Gamma$ has no homotopically trivial
component or $S$ is a disk and $\Gamma$ is connected, see \cite{Giroux_cercles}.
In addition two tight $\SS^1$-invariant contact structures on $V$ are isotopic
(relative to $\partial V$) if and only if their dividing sets are isotopic
(relative to $\partial S$). This is stated only for closed surfaces in
\cite{Giroux_cercles} but the proof is only easier if the boundary of $S$ is
not empty.

Recall from Section~\ref{ss:torus} that the contact structure $\xi_d$
on $\TT^3$ with coordinates $(x, y, z)$ is defined by:
\[
\xi_d = \ker\big( \cos(2d\pi z)dx - \sin(2d\pi z)dy\big)
\]
and they are pairwise non-isomorphic.

\begin{proposition}
\label{prop:isotopiesT3}
In $(\TT^3, \xi_d)$, let $T$ be the torus $\{8d\,z = \cos x \}$. Denote by $j_0$
the inclusion of $T$ into $\TT^3$ and by $j_1$ the embedding obtained by
restriction to $T$ of the rotation $(x, y, z) \mapsto (x, y, z + 1/d)$.
Those two embeddings are smoothly isotopic and:
\begin{itemize}
\item 
	$j_0$ and $j_1$ induce the same characteristic foliation on $T$
\item 
	$j_0$ is not isotopic to $j_1$ among $\xi_d$-convex embeddings,
\item
	there is a discretized isotopy from $j_0$ to $j_1$ with only forward steps
	changing the direction of dividing curves,
\item
	there is a discretized isotopy from $j_0$ to $j_1$ consisting of four forward steps
	which change the number of dividing curves without changing their direction.
\end{itemize}
\end{proposition}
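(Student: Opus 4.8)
The unifying observation is that the rotation $R_{1/d}\co (x,y,z)\mapsto(x,y,z+1/d)$ is itself a contactomorphism of $(\TT^3,\xi_d)$: shifting $z$ by $1/d$ leaves $\cos(2d\pi z)$ and $\sin(2d\pi z)$ unchanged, so $R_{1/d}^*\xi_d=\xi_d$. Since $j_1=R_{1/d}\circ j_0$ while $R_{t/d}$, $t\in[0,1]$, is a smooth isotopy from $\Id$ to $R_{1/d}$, the embeddings $j_0$ and $j_1$ are smoothly isotopic; and as a contactomorphism carries characteristic foliations to characteristic foliations, $j_1^*\bigl(\xi_d\, j_1(T)\bigr)=j_0^*\bigl(\xi_d\, j_0(T)\bigr)$, which is the first bullet. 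From here on I take $d\ge 2$ (for $d=1$ one has $j_0=j_1$ and the other assertions are empty).

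First I would record the two $\SS^1$-invariant pictures. Seen as the $\SS^1_y$-bundle $\TT^3\to\TT^2_{x,z}$, the structure $\xi_d$ is invariant and has dividing set the $2d$ circles $\{z=k/(2d)\}$; the torus $T=\{8dz=\cos x\}$ is invariant, of the form $\gamma_0\times\SS^1_y$ with $\gamma_0$ a small graph over the $x$-circle meeting only the line $\{z=0\}$, transversally and in two points. By the description of invariant structures recalled above, $j_0(T)$ is thus $\xi_d$-convex, divided by the two Legendrian $y$-circles $L_0^\pm$ lying over those points at height $z=0$, and $j_1(T)=R_{1/d}(T)$ is divided by $L_1^\pm=R_{1/d}(L_0^\pm)$ at height $z=1/d$.

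The core of the proposition is the second bullet, and its reduction is the step I expect to cost the most care. Suppose $j_0$ and $j_1$ were joined by a path of $\xi_d$-convex embeddings. Convexity being stable, the isotopy class of the dividing set is constant along the path (the pulled-back foliations stay in a single open set $\F(T;\Gamma)$ of Proposition~\ref{prop:divided_foliations}), so the path lies in $\P(T,\TT^3;\Gamma)$; by Proposition~\ref{prop:divided_foliations}.c together with the first bullet, $j_0$ and $j_1$ then represent the same class in $\pi_0\P(T,\TT^3;\xi_d)$. Lifting a connecting path through the fibration of Lemma~\ref{lemma:fibration_PFxi} yields a contact isotopy $\varphi_t$ of $\TT^3$ with $\varphi_1|_T=R_{1/d}|_T$, hence a Legendrian isotopy $\varphi_t(L_0^+)$ from $L_0^+$ to $L_1^+$. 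To derive a contradiction I pass to the cover $\TT^2\times\RR\to\TT^3$, $(x,y,s)\mapsto(x,y,s\bmod 1)$, on which $\xi_d$ pulls back to $\ker\bigl(\cos(2d\pi s)\,dx-\sin(2d\pi s)\,dy\bigr)$; the substitution $u=ds$ identifies this with the $n=1$ model of Proposition~\ref{prop:LegT2R}. There $L_0^+$ lifts to height $u=0$ and $L_1^+$ to the heights $u\in 1+d\ZZ$, all nonzero since $d\ge 2$, so Proposition~\ref{prop:LegT2R} forbids the lifted Legendrian isotopy. The delicate points are exactly ruling out a change of dividing set along the convex path and keeping track of which lift of $L_1^+$ is reached.

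Finally, the last two bullets are explicit constructions inside these invariant models. For the fourth bullet I stay $\SS^1_y$-invariant and push the base graph $\gamma$ upward, letting it acquire and then shed pairs of transverse intersections with the successive lines $\{z=0\},\{z=1/(2d)\},\{z=1/d\}$; the four forward steps realize the intersection patterns $\{0\}$, $\{0,\tfrac1{2d}\}$, $\{\tfrac1{2d}\}$, $\{\tfrac1{2d},\tfrac1d\}$, $\{\tfrac1d\}$, so the number of dividing $y$-circles runs $2,4,2,4,2$ while their direction is unchanged, convexity failing only at the tangencies of $\gamma$ with a dividing line. For the third bullet I instead use that the slab $0\le z\le 1/d$ is a full $2\pi$-rotation layer and alternate between the $\SS^1_y$-invariant picture and the $\SS^1_x$-invariant one (whose dividing lines are $\{z=(2k+1)/(4d)\}$), interpolating through convex tori whose dividing slope runs $\partial_y,\partial_x,\partial_y,\partial_x,\partial_y$, the non-convex instants being the intervening flat pre-Lagrangian tori. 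In both cases what remains is the routine check that each step is a forward embedding with convexity failing only at the prescribed midpoints, which rests on the classification of tight contact structures on $\TT^2\times[0,1]$ already used in Section~\ref{sec:main}.
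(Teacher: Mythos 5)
Your proposal is correct, and for three of the four bullets it coincides with the paper's own argument: the first bullet is the same one-line observation that $R_{1/d}$ is a contactomorphism; your fourth-bullet construction (staying $\SS^1_y$-invariant and pushing the base curve monotonically across the dividing lines $\{z=k/(2d)\}$, with intersection patterns giving $2,4,2,4,2$ dividing circles and convexity failing only at the tangencies) is exactly the paper's proof via Figure~\ref{fig:discretizedT2}; and your third bullet rests, as in the paper, on the piece of the classification of tight contact structures on $\TT^2\times[0,1]$ asserting that between convex boundary tori whose dividing directions intersect once, one can interpolate with a single non-convex level. The genuine divergence is in the second bullet. Both you and the paper first convert a hypothetical path of $\xi_d$-convex embeddings into an ambient contact isotopy $\varphi$ with $j_1=\varphi_1\circ j_0$ (via Proposition~\ref{prop:divided_foliations}.c and Lemma~\ref{lemma:fibration_PFxi}) and pass to the cover $\TT^2\times\RR$; but from there the paper argues with \emph{torsion}: after cutting off the lifted isotopy it finds a compact slab $T_{[-N,0]}$ carried to $T_{[-N,1]}$, and these are not contactomorphic by the classification of tight contact structures on $\TT^3$ (otherwise $\xi_{N+1}$ and $\xi_{N+2}$ would be isomorphic). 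You instead track a closed Legendrian leaf $L_0^+\subset T$ at height $z=0$, lift, rescale by $u=ds$ to land in the $n=1$ model of Proposition~\ref{prop:LegT2R} with target heights in $1+d\ZZ$, and invoke that proposition — mirroring the paper's proof of Lemma~\ref{lemma:embeddings_torus} rather than its proof of this proposition. This is valid; two small remarks: your $L_0^\pm$ are Legendrian divides (closed leaves of $\xi_d T$), parallel to but not literally the dividing curves, which are transverse to the characteristic foliation — harmless, since you only use that $L_0^+$ is Legendrian and lies on $T$; and the point you flag as delicate, namely which lift of $L_1^+$ is reached, is in fact automatic, since every element of $1+d\ZZ$ is nonzero once $d\ge 2$. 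As for what each route buys: yours reuses an ingredient (Proposition~\ref{prop:LegT2R}, from \cite{Paolo_T3}) already needed elsewhere in the paper and avoids the slab/gluing bookkeeping, whose normalizations in the paper's own proof are written somewhat loosely; the paper's torsion argument stays entirely within the classification machinery of \cite{Giroux_bif} and needs no external non-displaceability input. Your explicit restriction to $d\ge 2$ is also correct: for $d=1$ one has $R_{1/d}=\Id$, so $j_1=j_0$ and the second bullet is vacuous — a hypothesis the paper leaves implicit.
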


\begin{proof}
Since the rotation map is a contactomorphism, $j_0$ and $j_1$ induce the same
characteristic foliation on $T$. Assume for contradiction that $j_0$ and $j_1$
are isotopic through $\xi_d$-convex surfaces. 
Proposition~\ref{prop:divided_foliations}c and
Lemma~\ref{lemma:fibration_PFxi} then imply that there is a contact isotopy
$\varphi$ such that $j_1 = \varphi_1 \circ j_0$. We lift this isotopy to 
$\TT^2 \times \RR$ which covers $\TT^3$ by 
$(x, y, s) \mapsto (x, y, s \mod 2\pi)$. We denote by $\varphi'$ the
lifted isotopy and by $T'$ some (fixed) lift of $T$.
We denote by $\tau_n$ the translation
$(x, y, s) \mapsto (x, y, s + n)$ and by $T_{[a, b]}$ the
compact manifold bounded by $\tau_a(T')$ and $\tau_b(T')$.
Because $T'$ is compact
and contact isotopies can be cut-off, we can assume that $\varphi'$ is compactly
supported. Then there is some $N$ such that $\varphi_1$ sends $T_{[-N, 0]}$ to
$T_{[-N, 1]}$. In particular those submanifolds are contactomorphic. This
contradicts the classification of tight contact structures on $\TT^3$ since
this contactomorphism could be used to build a contactomorphism from 
$(\TT^3, \xi_{N+1})$ to $(\TT^3, \xi_{N+2})$.

The existence of a discretized isotopy from $j_0$ to $j_1$ consisting of
forward steps changing the direction of dividing curves follows from repeated
uses of a small part of the classification of tight contact structures on
thickened tori: if $\xi$ is a tight contact structure on $\TT^2 \times [0, 1]$
such that $\TT^2 \times \{0\}$ and $\TT^2 \times \{1\}$ are $\xi$-convex with
two dividing curves $\gamma_0, \gamma_0'$ and $\gamma_1, \gamma_1'$
respectively such $\gamma_0$ intersects $\gamma_1$ transversely at one point
then $\xi$ is isotopic relative to the boundary to a contact structure $\xi'$
such that all tori $\TT^2 \times \{t\}$ are $\xi'$-convex except 
$\TT^2 \times \{1/2\}$.

In order to construct a discretized isotopy where the direction of dividing
curves is constant, we see $\xi_d$ as an $\SS^1$-invariant contact structure on
$\TT^3$ with $\SS^1$ action given by rotation in the $y$ direction. In order to
describe an $\SS^1$-equivariant isotopy of embeddings of $T$, it is enough to
give a isotopy of curves in $\TT^2$. Curves corresponding to are $\xi_d$-convex
tori are exactly those which are transverse to $\Gamma = \{ x \in (\pi/d) \ZZ\}$.
Figure~\ref{fig:discretizedT2} then finishes the proof.
\begin{figure}[ht]
	\centering
	\includegraphics{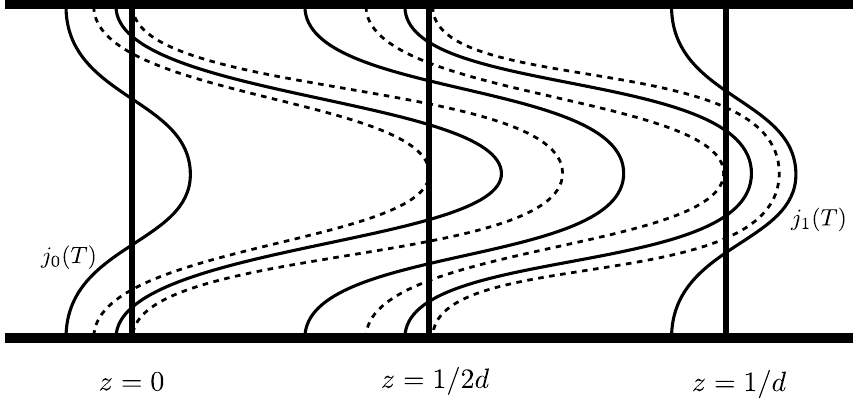}
	\caption{Discretized isotopy of curves lifting to tori in 
		$(\TT^3, \xi_d)$. Curves lifting to non-convex tori are dashed.}
	\label{fig:discretizedT2}
\end{figure}
\end{proof}

In our next example the discretized isotopy oscillates and there is persistent
intersection.

\begin{proposition}
\label{prop:isotopiesT3A}
Let $V$ be the torus bundle over $\SS^1$ with monodromy
$B = \left(\begin{smallmatrix} 5 & 1 \\ 4 & 1 \end{smallmatrix}\right)$,
ie.
\[
	V = (\TT^2 \times \RR)/\left( (Bx, t) \sim (x, t + 1)\right).
\]
Let $T$ be the image of $\TT^2 \times \{1/2\}$ in $V$ and let $j_0$ be the
inclusion map from $T$ to $V$.
There is a tight virtually overtwisted contact structure $\xi$ on $V$ and 
an embedding $j_1 \in \Po(T, V ; \xi)$ such that:
\begin{itemize}
\item 
$j_0$ is not isotopic to $j_1$ in $\Po(T, V ; \xi)$;
\item 
any $j \in \Po(T, V ; \xi)$ such that $j(T)$ is disjoint from $T$ is isotopic
to $j_0$ in $\Po(T, V ; \xi)$ (in particular $j_1(T)$ cannot be disjoined from
$T$ by contact isotopy);
\item
there is a discretized isotopy from $j_0$ to $j_1$ with one forward
step and one backward step, both modifying the direction of dividing curves. 
\end{itemize}
\end{proposition}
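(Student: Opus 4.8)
The plan is first to build $\xi$ from the Farey-graph classification of tight contact structures on thickened tori \cites{Giroux_bif, Honda_I}. Cutting $V$ along $T$ yields a thickened torus $N \simeq \TT^2 \times [0,1]$ whose two boundary slopes are related by the monodromy $B$; the matrix $B$ is chosen so that the minimal paths joining these slopes in the Farey tessellation are long enough to leave room for a tight structure whose sequence of basic slices changes sign exactly once. Such a single sign change is precisely what makes $\xi$ virtually overtwisted: pulling back to the finite cyclic cover unwrapping the base (the torus bundle with monodromy $B^n$) stacks the mismatched basic slices and eventually creates a homotopically trivial dividing curve, hence an overtwisted disk, while $\xi$ itself stays tight by the Bennequin-type estimate of \cite{Giroux_bif}. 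I would record, as the crucial invariant, the position of this unique sign-changing slice relative to the coorientation of $T$; equivalently, the coorientation-preserving contactomorphism type of the tight thickened torus obtained by cutting $V$ along $T$. The two tight structures on $N$ in which the defect sits next to the positive, resp.\ negative, boundary are interchanged by a boundary-swapping contactomorphism but are \emph{not} coorientation-preservingly contactomorphic, and this is the feature that will separate $j_0$ from $j_1$.

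I would construct $j_1$ together with the discretized isotopy of the third bullet. Sweeping $T$ forward across the first bifurcation and then backward across the second --- one forward step and one backward step in the sense of the preceding definition, each crossing a single non-convex time and each changing the slope of the dividing set as in the basic-slice move quoted in the proof of Proposition~\ref{prop:isotopiesT3} --- transports $T$ across the sign-changing slice and thereby exchanges the two values of the invariant above. The resulting embedding $j_1$ lies in $\Po(T,V;\xi)$ after the adjustment of characteristic foliations provided by Proposition~\ref{prop:divided_foliations}, and (as will follow from the first two bullets) its image necessarily meets $T$, which is the source of the persistent intersection. This realizes the third bullet.

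For the second bullet I would use the uniqueness part of the thickened-torus classification. If $j(T)$ is disjoint from $T$ it lies in the interior of $N$, as a convex torus parallel to the fibers carrying a dividing set isotopic to $\Gamma$. Being disjoint from $T$, it does not cross the sign-changing slice, so the defect of $\xi$ remains on the same side of it as for $j_0$; the classification then shows $j(T)$ is convex-isotopic, keeping its dividing set, to a collar-parallel copy of $T$, whence $j$ is isotopic to $j_0$ in $\Po(T,V;\xi)$. Applied to $j_1$, whose invariant value is the opposite of $j_0$'s, this yields that $j_1(T)$ cannot be made disjoint from $T$ by any contact isotopy.

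The first bullet, which I expect to be the main obstacle, is then proved by contradiction exactly along the lines of the proof of Proposition~\ref{prop:isotopiesT3}. A path from $j_0$ to $j_1$ in $\Po(T,V;\xi)$ lifts, via Proposition~\ref{prop:divided_foliations}.c and the fibration of Lemma~\ref{lemma:fibration_PFxi}, to a contact isotopy $\varphi_t$ with $\varphi_0 = \Id$ and $\varphi_1(T)=j_1(T)$; in particular $\varphi_1$ is a coorientation-preserving contactomorphism, isotopic to the identity, restricting to a contactomorphism between the thickened tori obtained by cutting $V$ along $T$ and along $j_1(T)$ that matches positive boundary with positive boundary. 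Since $j_1(T)$ carries the opposite value of the invariant, this would be a coorientation-preserving contactomorphism between the two tight thickened tori in which the sign-changing slice sits on opposite sides --- which the classification forbids. The hard point is precisely to show that the position of this slice relative to the coorientation is a genuine invariant of coorientation-preserving contactomorphism (a boundary swap being the only symmetry that moves it), and to verify that the forward--backward move really exchanges the two sides; this is the analog, for the finer basic-slice data, of the torsion comparison $\xi_{N+1}\not\cong\xi_{N+2}$ used in Proposition~\ref{prop:isotopiesT3}.
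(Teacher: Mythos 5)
Your overall architecture --- cut $V$ along the torus, invoke the classification of tight contact structures on thickened tori, distinguish the complements of $T$ and $j_1(T)$ by an invariant, and produce the discretized isotopy by the basic-slice move quoted in Proposition~\ref{prop:isotopiesT3} --- is the same as the paper's, but the step you yourself flag as ``the hard point'' is a genuine gap, and it is exactly where the paper does something different. In your setup the structure has a \emph{single} sign-changing slice, so the complements of $T$ and of $j_1(T)$ are both mixed, hence both virtually overtwisted, and the only thing that could separate them is the \emph{position} of the sign change relative to the coorientation. That position is not a contactomorphism invariant in any straightforward sense: basic slices within a continued-fraction block can be shuffled, and even when the sign change sits at a corner of the Farey path, the involution induced by $-\Id$ on $\TT^2$ preserves each boundary component while exchanging positive and negative basic slices, so it threatens to interchange the patterns $(+\cdots+\mid-\cdots-)$ and $(-\cdots-\mid+\cdots+)$ without any boundary swap; you offer no argument ruling this out. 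The paper sidesteps this entirely: using Giroux's normal forms (rotation sequences and orbit flips) it constructs \emph{two} contact structures $\xi$ and $\xi'$ on $V$, isotopic via some $\varphi$ with $\xi' = \varphi_1^*\xi$ (whence $j_1 := \varphi_1 \circ j_0$), whose orbit flips occur at different homology classes, arranged so that the restriction of $\xi$ to the complement of $T$ is \emph{universally tight} while that of $\xi'$ is \emph{virtually overtwisted}. A path from $j_0$ to $j_1$ in $\Po(T,V;\xi)$ would, via Proposition~\ref{prop:divided_foliations}.c and Lemma~\ref{lemma:fibration_PFxi}, yield a contactomorphism between these two complements, and universal tightness is a manifest invariant --- no positional bookkeeping is needed. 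This is the analogue you were looking for of the torsion comparison in Proposition~\ref{prop:isotopiesT3}, but it lives on the complement rather than in a fine slice datum.

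Your second bullet has a second gap. When $j(T)$ is disjoint from $T$, the classification of tight structures on the thickened torus they bound gives a dichotomy: either $j$ is isotopic to $j_0$ in $\Po(T,V;\xi)$, \emph{or} that thickened torus contains a contact embedding of the $\pi$-torsion slab $\left(\TT^2 \times [0,1], \ker\left(\cos(\pi z)\,dx - \sin(\pi z)\,dy\right)\right)$. Your phrase ``it does not cross the sign-changing slice'' does not address the second alternative, in which the dividing sets make an extra half-turn; the paper excludes it by appealing to the study of tight structures on $V$, specifically \cite[Proposition~1.8]{Giroux_bif}. A side remark: your mechanism for virtual overtwistedness is also off --- the relevant finite covers unwrap the fiber $\TT^2$ directions, not the base circle, and stacking slices in the base direction does not by itself create a homotopically trivial dividing curve.
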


\begin{proof}
We will use the theory of normal forms for tight contact structures on $V$
established in \cite[Section~3]{Giroux_bif}	(see also \cite{Honda_I}). With the
notation used in \cite{Giroux_bif}, $V = T^3_A$ where 
$A = B^{-1} = \left(\begin{smallmatrix} 1 & -1 \\ -4 & 5
\end{smallmatrix}\right)$. Other choices of monodromies are possible, we only
want to explain one simple example.

We will describe the lifts of relevant contact structures on
$\TT^2 \times [0, 1] \subset \TT^2 \times \RR$. In \cite{Giroux_bif}, tight
contact structures on thickened tori are described using two types of building
blocks: rotation sequences and orbit flips that we will briefly review.

Recall that a foliation $\sigma$ on a torus $T$ is called a
suspension if there is a circle which transversely intersects all leaves. It
then has an asymptotic direction $d(\sigma)$ which is a line through the origin
in $H_1(T; \RR)$ spanned by limits of renormalized very long orbits of a
directing vector field.

We fix a contact structure $\xi$ on $T \times [0, 1]$ and set
$T_a = T \times \{a\}$. A interval 
$J \subset [0, 1]$ is called a rotation sequence for $\xi$ if all
characteristic foliations $\xi T_t$, $t \in J$ are suspensions.
We say that $J$ is minimally twisting if the directions $d(\xi T_t)$ 
do not sweep out the full projective line $P(H_1(T; \RR))$. Theorem 3.3 from
\cite{Giroux_bif} guaranties that two contact structures on
$T \times J$ which agree along the boundary and have $J$ as a minimally twisting 
rotation sequence are isotopic on $T \times J$ relative to boundary.

An interval $[a, b] \subset [0, 1]$ is an orbit flip sequence for $\xi$ with
homology class $d \in H_1(T; \ZZ)$ if:
\begin{itemize}
	\item $\xi T_a$ is a Morse-Smale suspension with two closed orbits whose
		homology classe is $d$ ;
	\item $\xi T_b$ is a Morse-Smale suspension with two closed orbits whose
		homology classe is $-d$ ;
	\item there is a multi-curve which divides all $\xi T_t$, $t \in J$.
\end{itemize}
The uniqueness lemma \cite[Lemma 2.7]{Giroux_bif} ensures that two contact
structures on $T \times [a, b]$ which agree along the boundary and admit 
$[a, b]$ as an orbit flip sequence are isotopic relative to boundary. There is
an explicit model in \cite[Section 1.F]{Giroux_bif} where all $\xi T_t$ are
suspensions except one which has two circles of singularities instead of
regular closed leaves.

Here we need two (isotopic) contact structures on $\TT^2 \times [0, 1]$. 
We fix a Morse-Smale suspension $\sigma_0$ on $\TT^2$ with two closed orbits
having homology class $(1, 0)$ and we denote by $\sigma_1$ the image of
$\sigma_0$ under $A$. We also fix a Morse-Smale suspension $\sigma_{1/2}$ 
with two closed orbits having homology class $(-1, 1)$.
Let $\xi$ be a contact structure on $\TT^2 \times [0, 1]$ such that
\begin{itemize}
	\item $\xi$ prints $\sigma_t$ on $\TT^2 \times \{t\}$ for 
		$t \in \{0, 1/2, 1\}$.
	\item $[0, 1]$ is a union of minimaly twisting rotation sequences and two
		orbit flip sequences with homology classes $(1,0)$ and $(-1, 1)$
		respectively.
\end{itemize}
Let $\xi'$ be a contact structure with the same properties except that orbits
flips homology classes are $(1, -1)$ and $(-1, 2)$.
Results from \cite[Section~3]{Giroux_bif} guaranty that $\xi$ and $\xi'$ 
induce isotopic tight contact structures on $V$. In addition the restriction of
$\xi$ to the complement of the image $T$ in $V$ of $\TT^2_{1/2}$ is 
universally tight whereas restriction of $\xi'$ is virtually overtwisted.
If $\varphi$ is a smooth isotopy such that $\xi' = \varphi_1^*\xi$ then the
inclusion $j_0 : T \inclusion V$ and $j_1 = \varphi_1 \circ j_0$ are not in the
same component of $\Po(T, V; \xi)$.

The second point of the proposition follows again from classification results.
Let $j$ be an embedding in $\Po(T, V ; \xi)$ such that $j(T)$ is disjoint from
$T$. The classification of incompressible tori in the complement of $T$
guaranties that $T$ and $j(T)$ bound a thickened torus $N$ in $V$. The
classification of tight contact structures on thickened tori in
\cite{Giroux_bif} or \cite{Honda_I} ensures that either $j$ is isotopic to 
$j_0$ in $\Po(T, V ; \xi)$ or there exist a contact embedding of  
\[
\left(T^2 \times [0, 1], \ker\left(\cos(\pi z)dx - 
\sin(\pi z)dy\right)\right), \qquad (x,y,z) \in T^2 \times [0,1]
\] 
into the interior of $N$. But the existence of such an embedding is ruled out
by the study of tight contact structure on $V$, specifically 
\cite[Proposition~1.8]{Giroux_bif}.

The announced discretized isotopy use the image of $\TT^2 \times \{0\}$ as a
intermediate surface and its existence if guarantied by the classification
result quoted in the proof of Proposition~\ref{prop:isotopiesT3}.
\end{proof}

Finally we describe an example on a manifold with boundary with the same
situation as above but things untangle inside a larger manifold.

\begin{proposition}
\label{prop:needs_space}
Let $V$ denote the manifold $\TT^2 \times [0, 1]$ and 
$V' = \TT^2 \times [0, 1/2]$.
There is a universally tight contact structure $\xi$ on $V$ and two
smoothly isotopic $\xi$-convex embeddings $j_0, j_1 : \TT^2 \to V'$ with
images $T_0$ and $T_1$ such that
\begin{itemize}
\item 
$j_0$ is isotopic to $j_1$ among $\xi$-convex embeddings in $V$
\item 
$j_0$ is not isotopic to $j_1$ among $\xi$-convex embeddings in $V'$
\item
$T_0$ cannot be disjoined from $T_1$ by an isotopy among $\xi$-convex surfaces in 
$V'$
\item
there is a discretized isotopy from $j_0$ to $j_1$ in $V'$ with one forward
step and one backward step, both modifying the direction of dividing curves. 
\item
there is a discretized isotopy from $j_0$ to $j_1$ in $V'$ with one forward
step and one backward step, both modifying the number of dividing curves. 
\end{itemize}
\end{proposition}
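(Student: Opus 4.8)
The plan is to transplant the mechanism of Proposition~\ref{prop:isotopiesT3A} into a thickened torus, letting the extra collar $\TT^2 \times [1/2,1]$ play the role that the monodromy gluing played there. First I would fix coordinates $(x,y,z)$ on $V = \TT^2 \times [0,1]$ and take $\xi$ to be a universally tight contact structure for which the dividing slope of $\TT^2 \times \{z\}$ rotates monotonically, the model $\ker\big(\cos(2\pi n z)\,dx - \sin(2\pi n z)\,dy\big)$ (suitably perturbed so that the relevant tori are $\xi$-convex) serving as prototype; universal tightness is inherited from the universal cover exactly as for the structures $\xi_d$ on $\TT^3$. The integer $n$ is chosen so that the collar $\TT^2 \times [1/2,1]$ carries one full extra turn of rotation beyond what is available inside $V' = \TT^2 \times [0,1/2]$. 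I would then let $T_0$ be a $\xi$-convex torus well inside $V'$ and set $T_1 = \varphi(T_0)$, where $\varphi$ is a compactly supported contactomorphism of $V$, smoothly isotopic to the identity, that ``spins'' $T_0$ by one turn using the room in the collar; by construction $T_1$ again lies in $V'$, the embeddings $j_0,j_1$ are smoothly isotopic and induce the same characteristic foliation, and realizing $\varphi$ through $\xi$-convex tori gives an isotopy from $j_0$ to $j_1$ among $\xi$-convex embeddings in $V$, which is the first bullet.

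For the second and third bullets I would argue by contradiction, as in Propositions~\ref{prop:isotopiesT3} and \ref{prop:isotopiesT3A}. If $j_0$ were isotopic to $j_1$ among $\xi$-convex embeddings of $\TT^2$ in $V'$, then Proposition~\ref{prop:divided_foliations}.c together with the path-lifting property of the fibration $\Do(V',\partial V';\xi) \to \Po(\TT^2, V'; \xi)$ of Lemma~\ref{lemma:fibration_PFxi} would produce a contact isotopy of $V'$, relative to the boundary, carrying $T_0$ to $T_1$. But $T_0$ and $T_1$ cobound, with $\TT^2 \times \{0\}$, two sub-thickened-tori whose amounts of twisting differ by the full extra turn, so no boundary-fixing contactomorphism can equate them; this is exactly the torsion contradiction used in Proposition~\ref{prop:isotopiesT3}. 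The non-disjoinability is handled as in the second point of Proposition~\ref{prop:isotopiesT3A}: any $\xi$-convex $j(T_0)$ in $V'$ disjoint from $T_0$ cobounds with $T_0$ a thickened torus $N \subset V'$, and the classification of tight structures on thickened tori in \cite{Giroux_bif} (or \cite{Honda_I}) shows that either $j$ is isotopic to $j_0$ in $\Po(\TT^2, V'; \xi)$ or $N$ contains a torsion piece; the latter is excluded by universal tightness via \cite[Proposition~1.8]{Giroux_bif}, and combined with the second bullet this shows $T_1$ cannot be disjoined from $T_0$.

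Finally, the two discretized isotopies are produced by the two complementary techniques already used above. The direction-changing one comes from the small piece of the thickened-torus classification quoted in the proof of Proposition~\ref{prop:isotopiesT3}: one pushes $T_0$ across an orbit flip, overshooting the slope of $T_1$, and then back, so that each of the two steps changes the direction of the dividing curves while the net effect relates $T_0$ to $T_1$. The number-changing one is obtained in the $\SS^1$-invariant model (with the fibers in the $y$-direction), where $\xi$-convex tori transverse to the fibers correspond to essential curves $\gamma$ in the base annulus transverse to the dividing multi-curve $\Gamma$; an explicit isotopy of $\gamma$ that first acquires and then sheds two intersection points with $\Gamma$ realizes one forward and one backward step changing only the number of dividing curves, exactly as in Figure~\ref{fig:discretizedT2}.

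The hard part will be the calibration in the first paragraph: arranging $\xi$ so that its restriction to $V'$ stays universally tight and yet rigid enough to obstruct both the convex isotopy and the disjoining, while the full $V$ genuinely untangles. Making the spinning contactomorphism $\varphi$ explicit, and certifying through the thickened-torus classification that the extra turn in the collar is precisely what is needed in $V$ and absent in $V'$, is where the real work lies.
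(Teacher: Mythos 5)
Your construction cannot satisfy the first bullet, and the flaw is structural: the obstruction you invoke in $V'$ is symmetric between $V'$ and $V$. If $j_0$ and $j_1$ were joined by a path of $\xi$-convex embeddings in $V$, then Proposition~\ref{prop:divided_foliations}.c and Lemma~\ref{lemma:fibration_PFxi} --- the very tools you use for the second bullet --- convert that path into a contact isotopy of $V$ relative to $\partial V$, whose time-one map carries the slab cobounded by $\TT^2\times\{0\}$ and $T_0$ contactomorphically onto the slab cobounded by $\TT^2\times\{0\}$ and $T_1$. So if, as you arrange, these slabs have twisting differing by a full turn, the convex isotopy is obstructed in $V$ exactly as in $V'$: the collar $\TT^2\times[1/2,1]$ buys nothing, because your invariant is measured from the fixed boundary and is preserved by any boundary-fixing contactomorphism of the ambient manifold, large or small. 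This is why the torsion mechanism of Proposition~\ref{prop:isotopiesT3} is an \emph{absolute} obstruction there (the two embeddings are not convex-isotopic in $\TT^3$ at all); it cannot produce the relative phenomenon asked for here. Concretely, in your rotation model the natural candidate $T_1$ (the translate of $T_0$ by one period) is a parallel torus disjoint from $T_0$, so the third bullet fails outright; and any $T_1$ for which the slabs have different torsion kills the first bullet. A second, independent error: universal tightness does not exclude torsion pieces --- the structures $\xi_d$ on $\TT^3$ are universally tight of torsion $d-1$, and your model contains torsion slabs by construction; the exclusion in Proposition~\ref{prop:isotopiesT3A} rests on \cite[Proposition~1.8]{Giroux_bif}, a statement about that specific torus bundle, not a general consequence of universal tightness.

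The paper's construction avoids any twisting at all. It realizes $V = S\times\SS^1$ and $V' = S'\times\SS^1$ over annuli $S = \{1\le|z|\le 3\}\supset S' = \{1\le|z|\le 2\}$ and takes an $\SS^1$-invariant $\xi$ whose dividing set is a single essential circle $\Gamma\subset S$ meeting $S'$ in two boundary-parallel arcs $\Gamma'_1$, $\Gamma'_2$; the tori are $T_i = \gamma_i\times\SS^1$, where $\gamma_i\subset S'$ is an essential curve crossing $\Gamma'_i$ twice and missing the other arc (such curves necessarily intersect each other). The isotopy in $V$ exists simply because $\gamma_0$ and $\gamma_1$ are isotopic in $S$ through curves transverse to $\Gamma$. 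The obstruction in $V'$ is not torsion but the relative position of the torus with respect to the boundary-parallel dividing arcs: it is detected by pulling $\xi$ back under $\SS^1$-equivariant embeddings $\psi_0,\psi_1$ of $V'$ sending a boundary component to $T_0$, $T_1$ respectively, and then gluing an external invariant annulus piece whose dividing set closes up to a homotopically trivial curve on one side only, making one glued structure overtwisted and the other tight. Non-disjoinability is proved by putting the slab between the tori in $\SS^1$-invariant normal form and ruling out both possible configurations of its dividing arcs. If you want to salvage your plan, you need an obstruction of this relative-position type --- one preserved by boundary-fixing contactomorphisms of $V'$ but undone by sliding in the larger base --- rather than a contactomorphism invariant of the complementary pieces such as torsion.
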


\begin{proof}
The construction is pictured in Figure~\ref{fig:needs_space}.
\begin{figure}[ht]
	\centering
	\includegraphics{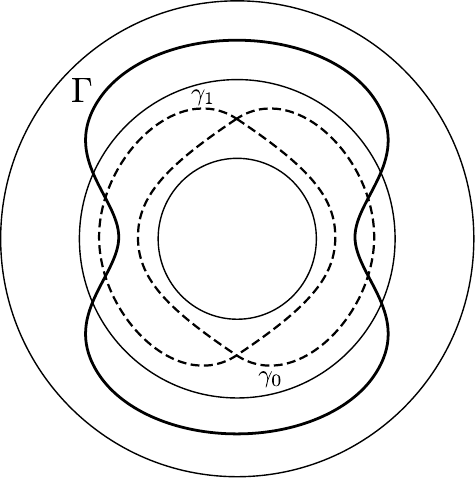}
	\caption{The example of Proposition~\ref{prop:needs_space}. The dividing
set $\Gamma$ is the thick curve, $\gamma_0$ and $\gamma_1$ are dashed.}
	\label{fig:needs_space}
\end{figure}
Let $S$ be the annulus $\{ 1 \leq |z| \leq 3\} \subset \CC$ and
$S' \subset S$ the subannulus $\{ 1 \leq |z| \leq 2\}$. We fix an
identification between $V$ and $S \times \SS^1$ which identify $V'$ with
$S' \times \SS^1$.
Let $\Gamma' = \Gamma'_1 \cup \Gamma'_2$ be a disjoint union of two
properly embedded arcs in $S'$ whose end points are on the circle 
$\{ |z| = 2\}$.
Let $\Gamma$ be a smooth homotopically essential circle in $S$ such that
$\Gamma \cap S' = \Gamma'$. Let $\xi$ be a $\SS^1$-invariant contact structure
on $V$ with dividing set $\Gamma$ and denote by $\xi'$ its restriction to $V'$.
Let $\gamma_0$ and $\gamma_1$ be homotopically essential circles in $S'$ such
that $\gamma_i$ intersects transversely $\Gamma'_i$ in two points and does not
intersect the other component of $\Gamma'$. The tori we want are 
$T_0 = \gamma_0 \times \SS^1$ and $T_1 = \gamma_1 \times \SS^1$, parametrized
by product maps.

There is an isotopy through $\xi$-convex surfaces in $V$ because $\gamma_0$ and
$\gamma_1$ are isotopic in $S$ through curves transverse to $\Gamma$.

Assume for contradiction that there is such an isotopy in $V'$. We can arrange
$\xi$ so that $j_0$ and $j_1$ induce the same characteristic foliation on
$\TT^2$ and, using Proposition~\ref{prop:divided_foliations}c and
Lemma~\ref{lemma:fibration_PFxi}, our isotopy through convex surfaces can then
be converted to a contact isotopy $\varphi$ relative to the boundary. 
We denote by $\partial_1 V'$ and $\partial_2 V'$ the connected components 
$\{ |z| = 1 \} \times \SS^1$ and $\{ |z| = 2 \} \times \SS^1$ of $\partial V'$.
Let $\psi_0$ and $\psi_1$ be smooth embeddings
of $V'$ into itself such that: 
\begin{itemize}
	\item Each $\psi_i$ is $\SS^1$-equivariant
	\item Each $\psi_i$ is the identity on $\partial_2 V'$
	\item $\psi_i(\partial_1 V') = T_i$.
\end{itemize}
The contact structures $\psi_0^*\xi$ and $\psi_1^*\xi$ on $V'$ are
$\SS^1$-invariant and the associated dividing sets $\Gamma_{\psi_0}$ and
$\Gamma_{\psi_1}$ are shown on Figure~\ref{fig:psi0_psi1}.
\begin{figure}[ht]
	\centering
	\includegraphics{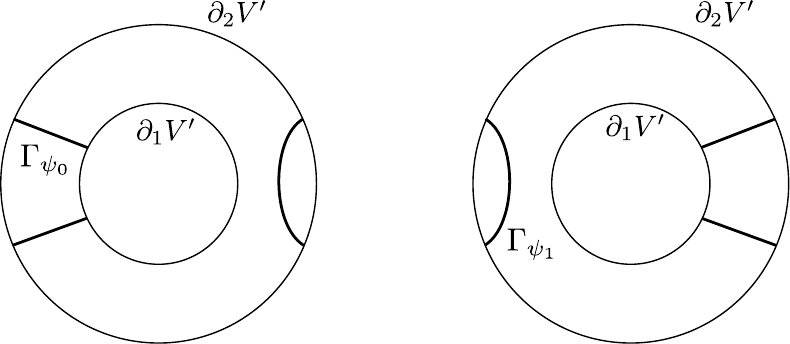}
	\caption{Dividing curves for the proof of Proposition~\ref{prop:needs_space}.
Thin curves are boundary components of $S'$ and thick curves are the components
of the dividing sets}
	\label{fig:psi0_psi1}
\end{figure}
The contactomorphism $\varphi_1$ then induces a contactomorphism between 
$(V', \psi_0^*\xi)$ and $(V', \psi_1^*\xi)$ which is the identity on
$\partial_2 V'$. However the classification of $\SS^1$-invariant contact
structures forbids the existence of this contactomorphism. In this case we can
argue directly as follows. We denote by $S''$ the annulus 
$\{ 2 \leq |z| \leq 3\}$. Let $\xi''$ be a contact structure on 
$S'' \times \SS^1$ which is $\SS^1$-invariant and tangent to $\SS^1$ along some
$\Gamma''$ such that $\Gamma_{\psi_0} \cup \Gamma''$ has a homotopically
trivial component but $\Gamma_{\psi_1} \cup \Gamma''$ has not. The contact
structure $\xi'' \cup \psi_0^*\xi$ on $V$ is overtwisted whether 
$\xi'' \cup \psi_1^*\xi$ is tight so we have a contradiction.

So there is no contact isotopy $\varphi$ in $V'$ such that 
$j_1 = \varphi_1 \circ j_0$.
Assume for contradiction that there is a contact isotopy $\varphi$ in $V'$ such
that $T_0' = \varphi_1(T_0)$ is disjoint from $T_1$. The classification of
incompressible surfaces in thickened tori ensures that $T_0' \cup T_1$ is the
boundary of a thickened torus in the interior of $V'$. After some smooth
deformation, we can assume that $T_0' = \{|z| = r_0\} \times \SS^1$ and 
$T_1 = \{|z| = r_1\} \times \SS^1$ and the contact structure is
$\SS^1$-invariant near $T_0'$ and $T_1$ (note that we don't know the sign of
$r_0 - r_1$). Those tori are both divided by
vertical curves $\{*\} \times \SS^1$ so the classification of universally tight
contact structures on tickened tori guaranties that, after some further isotopy
relative to $T_0'$ and $T_1$, the contact structure is $\SS^1$-invariant
everywhere (see \cite[Théorème~4.4]{Giroux_bif} or \cite{Honda_I}). The dividing
set in the annulus $\{ |z| \in [r_0, r_1]\}$ intersects each boundary component
in two points so it is either two boundary parallel arcs and some closed
components or two traversing arcs. The first possibility is ruled out by the
classification of $\SS^1$-invariant contact structures up to (non-necessarily
invariant) isotopy since the full dividing set on $S'$ would not be isotopic
to $\Gamma'$. The second possibility is ruled out because $T_0'$ and $T_1$
would then be isotopic among $\xi'$-convex surfaces, contradicting the previous
point.

The construction of discretized isotopies is completely analogous to what we
discussed for Proposition~\ref{prop:isotopiesT3}.
\end{proof}

\begin{bibdiv}
\begin{biblist}
\bibselect{cmcg}
\end{biblist}
\end{bibdiv}
\end{document}